\newcommand{\abs}[1]{\left|#1\right|}
\newcommand\lrb[1]{\left\lbrace#1 \right\rbrace}
\newcommand\lrp[1]{\left(#1 \right)}
\newcommand\C{{\mathbb C}}
\newcommand\N{{\mathbb N}}
\newcommand\R{{\mathbb R}}
\newcommand\cc[1]{\overline {#1}}
\newcommand\vb[1]{\!\left\langle {#1} \right |}
\newcommand\vk[1]{\left |{#1}\right\rangle\!}
\newcommand\ip[2]{\left\langle {#1},{#2} \right\rangle}
\newcommand\no[1]{\left\| {#1} \right\|}
\newcommand\unit{\hbox{\rm 1\kern-2.8truept l}}
\newcommand\tr[1]{{{\rm tr}}\left(#1\right)}
\DeclareMathOperator{\dom}{dom}
\DeclareMathOperator{\ran}{ran}
\DeclareMathOperator{\Span}{span}
\numberwithin{equation}{section}
 \newtheorem{thm}{Theorem}[section]
 \newtheorem{cor}[thm]{Corollary}
 \newtheorem{lem}[thm]{Lemma}
 \newtheorem{prop}[thm]{Proposition}
 \theoremstyle{definition}
 \theoremstyle{remark}
 \newtheorem{rem}[thm]{Remark}
  \newtheorem{cex}[thm]{Counterexample}
 \numberwithin{equation}{section}
\begin{document}
\title[Inner products on the Hilbert space $S_2$]{Inner products on the Hilbert space $S_2$\\
of Hilbert--Schmidt operators}

% Information for  author
\author[Josu\'e I. Rios-Cangas]{Josu\'e I. Rios-Cangas}
% Information for  author
\address{Departamento de Matem\'aticas, Universidad Aut\'onoma Metropolitana, Unidad Iztapalapa,  San Rafael Atlixco 186, 093110 Iztapalapa, Ciudad de M\'exico.}
\email{jottsmok@xanum.uam.mx}
\date{\today}
\subjclass{Primary 46C50; Secondary 46C05, 47A65}
\keywords{Inner products, Hilbert--Schmidt operators, non-negative operators}

\begin{abstract}
This work presents a rigorous characterization of inner products on the Hilbert space $S_2$ of Hilbert--Schmidt operators. We first deal with a general setting of continuous sesquilinear forms on a Hilbert space $\mathcal H$,  and provide a characterization of all inner products by means of positive operators in $\mathcal {B(H)}$. Next, we establish necessary and sufficient conditions for an operator in $\mathcal B(S_2)$ to be positive. Identifying an inner product with a positive operator enables us to rigorously describe inner products on $S_2$.
\end{abstract}

\maketitle

\section{Introduction}  
 In functional analysis, it is a well-known and useful fact that all norms on a finite-dimensional normed linear space are equivalent. However, this is not the case for infinite-dimensional normed spaces; in fact, one can define uncountably many non-equivalent norms \cite{MR4726393}. Notably, some of the simplest and most practical norms are those induced by inner products. As a matter of fact, all norms on a Banach space that are induced by inner products are equivalent (see Corollary~\ref{cor:equivalence-norms}). This follows from the fact that all inner products on a Hilbert space $\mathcal H$ correspond one-to-one with positive linear operators in $\mathcal{B(H)}$ (see Lemma~\ref{th:univocal-correspondence-IP-PO}).

The class of Hilbert--Schmidt operators $S_2$ belongs to the class of compact operators in $\mathcal{B(H)}$ and plays a key role in quantum mechanics, especially in the formulation of non-commutative quantum mechanics \cite{MR3840084}.
One can endow  $S_2$ with a Hilbert space structure by equipping it with the usual inner product, defined as $(\eta,\tau)\mapsto \tr{\eta^*\tau}$, where $ \eta,\tau\in S_2$. Moreover, it is possible to characterize all continuous sesquilinear forms $\varphi\colon S_2\times S_2\to\C$ by  
\begin{gather}\label{eq:sesq-forms-innerP}
\varphi(\eta,\tau)=\tr{\eta^*A\tau}\,, \qquad A\in\mathcal B(S_2)\,,
\end{gather}
where $\varphi$ is hermitian if and only if $A$ is selfadjoint. Furthermore, $\varphi$ defines either an inner product or a definite inner product on $S_2$ if and only if $A$ is positive or positive definite, respectively (Lemma~\ref{th:cont-sesq-forms}).

The aim of this paper is to present a rigorous characterization of inner products on $S_2$, achieved through a fine-tuned analysis of positive operators in $\mathcal B(S_2)$. This approach was rigorously detailed in \cite{MR3119877} for the space of complex square matrices, and this work seeks to extend those results to the infinite-dimensional case. To achieve this goal, we first exhibit a representation of linear operators and selfadjoint operators in $\mathcal B(S_2)$ (Theorems~\ref{th2:general-decomosition-A} and~\ref{th:selfadjoint-decomposition}). After that, we establish necessary and sufficient conditions under which an operator in $\mathcal B(S_2)$ is positive (Subsections~\ref{ss-necc} and~\ref{ss-ssuff}). These results allow us to rigorously describe inner products on $S_2$. Roughly speaking, any continuous sesquilinear form 
\begin{gather*}
\varphi(\eta,\tau)=\sum_n\tr{\eta^*a_n\tau b_n}\,,\quad \eta,\tau\in S_2\,,
\end{gather*}
defines an inner product on $S_2$, if $a_1,\ldots\geq0$ and $b_1,\ldots>0$ in $\mathcal{B(H)}$, with $\cap_n \ker a_n=\{0\}$ (Theorem~\ref{th:producing-IP}). On the other hand, if $\varphi$ is a definite inner product on $S_2$ then there exist  $a_1,\ldots\geq0$, with $\cap_n \ker a_n=\{0\}$, and positive definite operators $b_1,\ldots$, in $\mathcal{B(H)}$, such that $\varphi$ satisfies either (Theorem~\ref{th:describing-IP})
\begin{align}\label{eq:desc-forms-innerP1}
\varphi(\eta,\tau)&=-\tr{\eta^* a_1\tau b_1}+\sum_{n\geq2}\tr{\eta^* a_n\tau b_n}\quad\mbox{or}\\\label{eq:desc-forms-innerP2}
\varphi(\eta,\tau)&=\sum_{n}\tr{\eta^* a_n\tau b_n}.
\end{align}
The inner product $\varphi$ always holds~\eqref{eq:desc-forms-innerP2} if 
its corresponding operator given in \eqref{eq:sesq-forms-innerP} fulfills $A\eta=\sum_{n=1}^m\hat a_n\tau\hat b_n$, with $\{\hat a_n\}_{n=1}^m,\{\hat b_n\}_{n=1}^m\in\mathcal{B(H)}$, for $m=1,2$ (Theorem~\ref{th:one-two-sumas-IP}). The appearance of the negative sign in the first term of~\eqref{eq:desc-forms-innerP1} is unsettling, but it cannot always be avoided due to Counterexample~\ref{cex:four-sums} (cf. \cite[Sec. 4.2]{MR3119877}).

For the convenience of the reader we address in Section~\ref{s2} the representation of all continuous sesquilinear forms (including inner products) on a Hilbert space $\mathcal H$. This discussion serves as a guideline for Section~\ref{s3}, in which we rigorously characterize continuous sesquilinear forms on $S_2$. In particular, Subsection~\ref{ss-necc} is devoted to presenting necessary conditions, 
while Subsection~\ref{ss-ssuff} addresses sufficiency conditions, both of which determine when an operator in $\mathcal B(S_2)$ is non-negative. The final subsection provides a rigorous characterization of all continuous sesquilinear forms, including inner products, on $S_2$.

%%%%%%%%%%%%%%%
\section{Some aspects about continuous sesquilinear forms on Hilbert spaces}\label{s2}

For a separable Hilbert space $(\mathcal H, \ip{\cdot}{\cdot})$ over $\C$, with inner product anti-linear in its left argument, a linear operator $T\colon \dom T\subset \mathcal H\to \mathcal H$ is \emph{symmetric} if
\begin{gather}\label{eq:non-negative-operator}
\ip{f}{Tf}\in \R\,,\qquad \mbox{for all }\,f\in\dom T\,.
\end{gather}
Besides, $T$ is \emph{non-negative} ($T\geq0$) if \eqref{eq:non-negative-operator} is non-negative, and it is \emph{positive} ($T>0$) if \eqref{eq:non-negative-operator} is positive, for all $f\in\dom T$ non-zero. Moreover,  
a positive operator $T$ is \emph{positive definite} if 
\begin{gather*}
\inf \lrb{\ip{f}{Tf}\,:\,f\in\dom T\,,\no f=1}>0\,.
\end{gather*}
The notions of being positive and positive definite coincide when the Hilbert space satisfies $\dim \mathcal H<+\infty$. 

It is not hard to show that a symmetric operator $T$ with $\dom T=\mathcal H$ is selfadjoint in $\mathcal{B(H)}$ (the class of all bounded operators with domain the whole space $\mathcal H$). Besides, a selfadjoint operator $T \in\mathcal{B(H)}$ satisfies
\begin{gather}\label{eq:glb-T}
m_T\colonequals\inf \lrb{\ip{f}{Tf}\,:\,f\in\dom T\,,\no f=1}\in \R\,,
\end{gather}
which is called the \emph{greatest lower bound} (briefly g.l.b.) of $T$. In this fashion, $T \in\mathcal{B(H)}$ is positive definite if $m_T>0$.

\begin{rem}\label{rm: biyection-SF-LO}
Any continuous sesquilinear form  $\varphi:\mathcal H\times\mathcal H\to \C$ admits the following representations (cf. \cite[Th.\,6,\,Sec.\,2.4]{MR1192782})
\begin{gather}\label{eq:examples-sq-forms}
\varphi(f,g)=\ip{Sf}{g}\,;\quad \varphi(f,g)=\ip{f}{Tg}\,,\qquad T,S\in\mathcal{B(H)}\,,
\end{gather}
with $\no\varphi=\no T=\no S$. Thereby, one can define a continuous sesquilinear form $\varphi$ \emph{hermitian, non-negative, positive or positive definite}, if the operator $T$ in \eqref{eq:examples-sq-forms} is selfadjoint, non-negative, positive or positive definite, respectively.
\end{rem}

It readily follows that a sesquilinear form is continuous and positive if and only if it defines an inner product on $\mathcal H$. So, we may say that an inner product is \emph{definite}, if it is a continuous positive definite sesquilinear form.

The following assertion is a simple matter from Remark~\ref{rm: biyection-SF-LO}, which exhibits a characterization of all inner products on $\mathcal H$.

\begin{lem}\label{th:univocal-correspondence-IP-PO}
All the inner products on a Hilbert space $(\mathcal H,\ip{\cdot}{\cdot})$ are in one-to-one correspondence with the positive linear operators in $\mathcal{B(H)}$. Every inner product $\varphi$ on $\mathcal H$ admits the representation 
\begin{gather}\label{eq:inner-P-rep}
\varphi(\cdot,\cdot)=\ip{\cdot}{T\cdot}\,,\quad 0<T\in \mathcal{B(H)}
\end{gather}
and satisfies $\no{\varphi}=\no T$. Moreover, \eqref{eq:inner-P-rep} is a definite inner product if and only if $T$ is positive definite.
\end{lem}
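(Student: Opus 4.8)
The plan is to exploit Remark~\ref{rm: biyection-SF-LO} as the workhorse and reduce the lemma to a bijectivity statement together with a careful matching of definitions. First I would fix the map $\Phi\colon\mathcal{B(H)}\to\{\text{continuous sesquilinear forms on }\mathcal H\}$ sending $T$ to the form $\varphi_T$ with $\varphi_T(\cdot,\cdot)\ceq\ip{\cdot}{T\cdot}$. By the second representation in \eqref{eq:examples-sq-forms}, every continuous sesquilinear form arises this way, so $\Phi$ is surjective; moreover the same remark gives $\no{\varphi_T}=\no T$ for every $T$, which is precisely the norm identity claimed in the lemma once we specialize to inner products.

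Next I would verify that $\Phi$ is injective, so that the representation in \eqref{eq:inner-P-rep} is unique and the word ``correspondence'' is justified. This is the routine observation that $\ip{f}{Tg}=\ip{f}{T'g}$ for all $f,g\in\mathcal H$ forces $\ip{f}{(T-T')g}=0$ for all $f,g$, hence $T=T'$. Combined with surjectivity, $\Phi$ is a bijection from $\mathcal{B(H)}$ onto the continuous sesquilinear forms.

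The heart of the argument is then to check that, under $\Phi$, the positive operators correspond exactly to the inner products. By the convention recorded just after Remark~\ref{rm: biyection-SF-LO}, a continuous sesquilinear form is an inner product precisely when it is positive, and by the same remark $\varphi_T$ is positive if and only if $T$ is positive. The one point deserving care---and the step I expect to be the main, if modest, obstacle---is confirming that a positive form is genuinely conjugate-symmetric, i.e.\ an honest inner product: positivity of $T$ entails $\ip{f}{Tf}\in\R$ for all $f$ as in \eqref{eq:non-negative-operator}, which by polarization over $\C$ forces $T=T^*$ and hence makes $\varphi_T$ hermitian; together with $\ip{f}{Tf}>0$ for $f\neq0$ this yields all the inner-product axioms. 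Restricting the bijection $\Phi$ to $\{\,0<T\in\mathcal{B(H)}\,\}$ therefore produces the asserted one-to-one correspondence and the representation \eqref{eq:inner-P-rep}.

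Finally, for the definiteness clause I would simply track the definitions along the same bijection: an inner product $\varphi_T$ is definite precisely when it is a continuous positive definite sesquilinear form, which by Remark~\ref{rm: biyection-SF-LO} happens if and only if $T$ is positive definite. This closes the argument, the whole proof being essentially a faithful repackaging of Remark~\ref{rm: biyection-SF-LO} once injectivity and the polarization step are in place.
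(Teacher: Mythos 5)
Your proposal is correct and follows essentially the same route as the paper, which simply declares the lemma ``a simple matter from Remark~\ref{rm: biyection-SF-LO}'': you supply exactly the routine details that the paper leaves implicit (surjectivity and the norm identity from \eqref{eq:examples-sq-forms}, injectivity from non-degeneracy of $\ip{\cdot}{\cdot}$, and the polarization argument over $\C$ showing that positivity of $T$ forces $T=T^*$ and hence conjugate symmetry of $\varphi_T$). No discrepancy with the paper's argument.
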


In the following, we  provide a special property of norms induced by inner products, on Banach spaces.

\begin{cor}\label{cor:equivalence-norms}
In a Banach space $\mathcal H$,  all norms induced by inner products are equivalent.
\end{cor}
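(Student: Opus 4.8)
The plan is to compare every inner-product norm with a single reference norm and to promote the resulting one-sided estimate into a genuine equivalence by means of the open mapping theorem. Since equivalence of norms is an equivalence relation, it suffices to fix the inner product $\ip{\cdot}{\cdot}$ that endows $\mathcal H$ with its ambient (complete) norm $\no{\cdot}=\ip{\cdot}{\cdot}^{1/2}$ and to prove that the norm $\no{\cdot}_\varphi$ induced by any other inner product $\varphi$ is equivalent to $\no{\cdot}$; transitivity then yields equivalence of any two such norms. Viewing $\varphi$ as a continuous sesquilinear form on the Hilbert space $(\mathcal H,\ip{\cdot}{\cdot})$, Lemma~\ref{th:univocal-correspondence-IP-PO} furnishes a positive operator $0<T\in\mathcal{B(H)}$ with $\varphi(\cdot,\cdot)=\ip{\cdot}{T\cdot}$ and $\no{\varphi}=\no{T}$. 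Consequently $\no{f}_\varphi^2=\ip{f}{Tf}\le\no{T}\,\no{f}^2$ for every $f\in\mathcal H$, so that $\no{\cdot}_\varphi\le\no{T}^{1/2}\no{\cdot}$ and the identity map $(\mathcal H,\no{\cdot})\to(\mathcal H,\no{\cdot}_\varphi)$ is continuous.

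For the reverse inequality I would invoke completeness. Both $\no{\cdot}$ and $\no{\cdot}_\varphi$ are complete norms on $\mathcal H$, so the continuous linear bijection just obtained is a map between Banach spaces; by the bounded inverse (open mapping) theorem its inverse is continuous as well, that is, there is a constant $C>0$ with $\no{f}\le C\no{f}_\varphi$ for every $f\in\mathcal H$. Combining the two estimates shows that $\no{\cdot}_\varphi$ and $\no{\cdot}$ are equivalent, and the reduction above completes the argument.

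The main obstacle is exactly this reverse bound. The upper estimate is automatic from the boundedness of $T$, but it cannot be reversed by an elementary computation: a merely non-negative operator $T$ with $m_T=0$---for instance an injective positive operator having $0$ in its spectrum---would define a genuine inner product whose norm is strictly weaker than $\no{\cdot}$, the associated completion being strictly larger than $\mathcal H$. Thus the completeness of $\no{\cdot}_\varphi$ (equivalently, the positive definiteness $m_T>0$ of the associated operator) is indispensable, and it is precisely this hypothesis that the open mapping theorem converts into the missing lower bound $\no{f}\le C\no{f}_\varphi$.
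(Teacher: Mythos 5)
Your argument is correct in substance but follows a genuinely different route from the paper's. The paper takes the two inner products $\varphi_1,\varphi_2$ and represents each in terms of the other, writing $\varphi_1(\cdot,\cdot)=\varphi_2(\cdot,T_1\cdot)$ and $\varphi_2(\cdot,\cdot)=\varphi_1(\cdot,T_2\cdot)$ via Lemma~\ref{th:univocal-correspondence-IP-PO}, and then reads off \emph{both} one-sided bounds from the Cauchy--Schwarz inequality; no reference norm and no open mapping theorem appear there. You instead compare every inner-product norm to a single reference norm and obtain the reverse inequality from the bounded inverse theorem. What your route buys is transparency about where completeness enters: to invoke Lemma~\ref{th:univocal-correspondence-IP-PO} with $\varphi_2$ as the ambient inner product, one must already know that $(\mathcal H,\no{\cdot}_{\varphi_2})$ is a Hilbert space and that $\varphi_1$ is $\no{\cdot}_{\varphi_2}$-continuous, and that continuity is essentially the inequality being proved; your closed-graph/open-mapping step is exactly what supplies it. Your closing observation is also on point: if ``inner product'' is read literally as a continuous positive (not positive definite) form, as in the paper's own definitions, the conclusion fails (a weight $Te_n=n^{-1}e_n$ on $\ell^2$ induces a strictly weaker inner-product norm), so completeness of the induced norms, equivalently $m_T>0$, is an indispensable hypothesis that both proofs use, yours explicitly and the paper's implicitly. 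One small repair: a general Banach space need not carry an inner product inducing its ambient norm, so you should not ``fix the inner product that endows $\mathcal H$ with its norm''; but you do not need to, since the upper bound $\no{f}_\varphi\leq\no{\varphi}^{1/2}\no{f}$ follows from continuity of the sesquilinear form alone, without the operator $T$, after which the open mapping theorem applies to the identity map between the Banach spaces $(\mathcal H,\no{\cdot})$ and $(\mathcal H,\no{\cdot}_\varphi)$, and transitivity finishes the argument as you describe.
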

\begin{proof}
Let $\varphi_1,\varphi_2$ be inner products on $\mathcal H$ and 
\begin{gather}\label{eq:norms-equivalence-1}
\no f_{\varphi_j}={\varphi_j}(f,f)^{1/2}\,,\qquad f\in\mathcal H\,,\quad  j=1,2\,.
\end{gather}
By Lemma~\ref{th:univocal-correspondence-IP-PO}, there exist unique positive operators $T_1,T_2\in\mathcal{B(H)}$, such that $\varphi_1(\cdot,\cdot)=\varphi_2(\cdot,T_1\cdot)$ and  $\varphi_2(\cdot,\cdot)=\varphi_1(\cdot,T_2\cdot)$. So, by Cauchy–Schwarz inequality  and \eqref{eq:norms-equivalence-1}, one simply computes that $\no f_{\varphi_1}^2\leq\no f_{\varphi_2}^2\no{T_1}_{\varphi_2}$ and  $\no f_{\varphi_2}\leq\no f_{\varphi_1}^2\no{T_2}_{\varphi_1}$. Therefore,
\begin{gather*}
\no{T_1}_{\varphi_2}^{-1/2}\no f_{\varphi_1}\leq \no f_{\varphi_2}\leq\no f_{\varphi_1}^2\no{T_2}_{\varphi_1}^{1/2}\,,\quad f\in\mathcal H\,,
\end{gather*}
as required.
\end{proof}

In what follows, we shall use the univocal correspondence stablished in Lemma~\ref{th:univocal-correspondence-IP-PO} to rigorously characterize inner products on the class of Hilbert--Schmidt operators.

\section{Continuous sesquilinear forms and inner products on $S_2$}\label{s3}

We regard \emph{the class of Hilbert--Schmidt operators} $S_2\subset \mathcal {B(H)}$, which consists of those operators $\eta\colon \mathcal H\to\mathcal H$ such that
\begin{gather}\label{eq:norm-HS}
\no\eta_2\colonequals\lrp{\sum_n\no{\eta u_n}^2}^{1/2}<+\infty\,, 
\end{gather}
being $\lrb{u_n}$ an orthonormal basis in $\mathcal H$, where the sum in \eqref{eq:norm-HS} is independent of the choice of basis. %Besides, the sequence $\{\lambda_n\}$ of the singular eigenvalues of $\eta$ (the eigenvalues of $\abs{\eta}$) belongs to $\ell_2(\N)$ and satisfies $\no \eta_2^2=\sum_ns_n^2$. Moreover, $(S_2,\no\cdot_2)$ is a separable Banach space (cf. \cite[Sect.\,11.3]{MR1192782}).

It is well-know that the space $S_2$ is naturally endowed with a Hilbert structure; viz. $(S_2,\ip{\cdot}{\cdot}_2)$ is a separable Hilbert space. The usual inner product on $S_2$ is given by 
\begin{gather}\label{eq:inner-product-S2}
\ip{\eta}{\tau}_2=\tr{\eta^*\tau}\,,\qquad \eta_\tau\in S_2\,,
\end{gather}
which induces the norm $\no\cdot_2$ given in \eqref{eq:norm-HS}. 

The expression \eqref{eq:inner-product-S2} allows characterizing continuous sesquilinear forms on $S_2$, by means of operators in $\mathcal B(S_2)$.

\begin{lem}\label{th:cont-sesq-forms}
Every continuous sesquilinear  form $\varphi\colon S_2\times S_2\to\C$ admits the representation 
\begin{gather*}
\varphi(\eta,\tau)=\tr{\eta^*A\tau}\,, \qquad \eta,\tau\in S_2\,,
\end{gather*}
where $A\in\mathcal B(S_2)$ and satisfies  $\no A=\no{\varphi}$. Besides, 
\begin{enumerate}[(i)]
\item\label{it1:inn-p} $\varphi$ is hermitian if and only $A$ is selfadjoint, and
\item\label{it2:inn-p} $\varphi$ is (definite) inner product on $S_2$ if and only if $A$ is positive (definite).
\end{enumerate}
\end{lem}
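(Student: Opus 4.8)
The plan is to recognize $(S_2,\ip{\cdot}{\cdot}_2)$ as a separable Hilbert space in its own right and then transport the abstract results of Section~\ref{s2} to this concrete setting. First I would invoke Remark~\ref{rm: biyection-SF-LO} with $\mathcal H$ replaced by $S_2$: every continuous sesquilinear form $\varphi$ on $S_2$ is represented as $\varphi(\eta,\tau)=\ip{\eta}{A\tau}_2$ for a unique $A\in\mathcal B(S_2)$, with $\no{\varphi}=\no A$. Substituting the explicit inner product $\ip{\eta}{\tau}_2=\tr{\eta^*\tau}$ from \eqref{eq:inner-product-S2} immediately yields $\varphi(\eta,\tau)=\tr{\eta^*A\tau}$ together with the stated norm equality, which settles the representation part of the claim.

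For the first assertion (i), I would compute both sides of the hermitian relation in terms of $A$. Using the anti-linearity of $\ip{\cdot}{\cdot}_2$ in its left argument, one has $\cc{\varphi(\tau,\eta)}=\cc{\ip{\tau}{A\eta}_2}=\ip{A\eta}{\tau}_2=\ip{\eta}{A^*\tau}_2$, whereas $\varphi(\eta,\tau)=\ip{\eta}{A\tau}_2$. Hence $\varphi$ is hermitian precisely when $\ip{\eta}{(A-A^*)\tau}_2=0$ for all $\eta,\tau\in S_2$, and by non-degeneracy of $\ip{\cdot}{\cdot}_2$ this is equivalent to $A=A^*$, i.e.\ $A$ selfadjoint.

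For the second assertion (ii), I would use the convention recalled just after Remark~\ref{rm: biyection-SF-LO}, namely that $\varphi$ is a (definite) inner product exactly when it is a continuous positive (positive definite) form, together with the definitions in \eqref{eq:non-negative-operator} and \eqref{eq:glb-T}. Evaluating on the diagonal gives $\varphi(\eta,\eta)=\ip{\eta}{A\eta}_2$, so $\varphi(\eta,\eta)>0$ for every non-zero $\eta$ if and only if $A>0$, while $\inf\lrb{\varphi(\eta,\eta):\no\eta_2=1}>0$ if and only if $A$ is positive definite; this is precisely Lemma~\ref{th:univocal-correspondence-IP-PO} applied to the Hilbert space $S_2$.

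The whole argument amounts to translating Section~\ref{s2} into the Hilbert space $S_2$, so no single step poses a serious obstacle. The only points requiring care are the identification $\ip{\eta}{A\tau}_2=\tr{\eta^*A\tau}$, which rests solely on substituting \eqref{eq:inner-product-S2}, and the verification that the intrinsic form-theoretic notions of being hermitian or positive coincide with the operator-theoretic ones for $A$, which in each case follows from the non-degeneracy of $\ip{\cdot}{\cdot}_2$.
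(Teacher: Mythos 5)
Your proposal is correct and follows exactly the route the paper takes: the paper's proof is a one-line appeal to Remark~\ref{rm: biyection-SF-LO}, Lemma~\ref{th:univocal-correspondence-IP-PO} and \eqref{eq:inner-product-S2}, and you have simply written out the details of that same argument (viewing $S_2$ as a Hilbert space, substituting the trace inner product, and checking (i) and (ii) via non-degeneracy and the definitions of positivity). No discrepancies.
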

\begin{proof}
It is straightforward from Remark~\ref{rm: biyection-SF-LO}, Lemma~\ref{th:univocal-correspondence-IP-PO} and \eqref{eq:inner-product-S2}. 
\end{proof}

By virtue of Lemma~\ref{th:cont-sesq-forms}, we are now motivated to describe linear operators in $\mathcal B(S_2)$. This analysis provides a rigorous characterization of continuous sesquilinear forms, particularly inner products, on $S_2$.

\subsection{Non-negative selfadjoint operators in $\mathcal B(S_2)$ (necessity)}\label{ss-necc}
Consider the system of rank-one operators 
\begin{gather*}
\varepsilon_{nm}\colonequals\vk{e_n}\vb{e_m}\in S_2\subset \mathcal {B(H)}\,,\quad n,m\in\N
\end{gather*}
as a basis for $\mathcal{B(H)}$ and as the canonical basis for $S_2$, where $\{e_n\}$ denotes the canonical basis for $\mathcal H$. Thus, one has for $f\in\mathcal H$ that $\varepsilon_{nm}f=\ip{e_m}{f}e_n$  and
\begin{gather*}
\varepsilon_{nm}\varepsilon_{jk}= \delta_{mj}\varepsilon_{nk}\,;\quad \ip{\varepsilon_{nm}}{\varepsilon_{jk}}_2=\delta_{nj}\delta_{mk}\,,\qquad n,m,j,k\in\N\,,
\end{gather*}
being $\delta_{mn}$ the \emph{Kronecker delta function}.

For $a,b\in \mathcal{B(H)}$, and $\eta\in S_2$, it follows that $b\eta a\in S_2$, since one readily checks that 
\begin{gather*}
\no{b\eta a}_2\leq\no a\no b\no \eta_2\,,
\end{gather*}
viz. $S_2$ is an ideal on $ \mathcal{B(H)}$. 

For subsets $\{a_n=\sum_{hk}\alpha_{hk}^{(n)}\varepsilon_{hk}\},\{b_n=\sum_{rm}\beta_{rm}^{(n)}\varepsilon_{rm}\}\subset \mathcal{B(H)}$, with $\alpha_{hk}^{(n)},\beta_{rm}^{(n)}\in\C$, the linear application  
 \begin{gather*}
 A\eta=\sum_na_n\eta b_n\,,\qquad \eta\in\Span\{\varepsilon_{nm}\}\subset S_2\,,
 \end{gather*}
 is a densely defined linear operator $A\colon S_2\to S_2$ if and only if 
 \begin{gather}\label{eq:condition-anbn}
\sum_{h,s}\abs{\sum_n\alpha_{hj}^{(n)}\beta_{l,n}^{(n)}}^2<+\infty\,,\qquad \mbox{for all }\, j,l\in\N\,.
\end{gather}
The above is true since for $j,l\in\N$,
\begin{align*}
\no{A\varepsilon_{jl}}_2^2&=\sum_s\no{\sum_na_n\varepsilon_{jl}b_ne_s}^2\\&=\sum_s\no{\sum_{n,h}\alpha_{hj}^{(n)}\beta_{l,s}^{(n)}e_h}^2=\sum_{h,s}\abs{\sum_n\alpha_{hj}^{(n)}\beta_{l,s}^{(n)}}^2\,.
\end{align*}

\begin{thm}\label{th:A-nonnegative}
If $\{a_n\},\{b_n\}\subset \mathcal{B(H)}$ satisfy \eqref{eq:condition-anbn} then the linear operator
 \begin{gather*}
 A\eta=\sum_na_n\eta b_n\,,\qquad \eta\in\Span\{\varepsilon_{nm}\}\subset S_2\,, \quad \mbox{ is:}
 \end{gather*}
\begin{enumerate}[(i)]
\item\label{it1:sA} Symmetric if $a_1,\ldots,b_1,\ldots$, are selfadjoint.
\item\label{it2:sA} Non-negative if $a_1,\ldots,b_1,\ldots\geq0$.
\item\label{it3:sA} Positive if $a_1,\ldots>0$ and $b_1,\ldots\geq0$. with $\cap_n \ker b_n=\{0\}$.
\item\label{it4:sA} Positive definite if $a_1,\ldots$ are positive definite, with g.l.b. $m_{a_n}$ (see \eqref{eq:glb-T}), and 
$b_1,\ldots\geq0$, such that 
\begin{gather*}
m_a=\inf_n\{m_{a_n}\}>0\quad\mbox{and}\quad\cap_n \ker b_n=\{0\}\,.
\end{gather*}
\end{enumerate}
The same result holds in items~\eqref{it3:sA}-\eqref{it4:sA} if the conditions of $\{a_n\}$ and $\{b_n\}$ are interchanged.

\end{thm}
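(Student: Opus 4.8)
The plan is to reduce all four assertions to a single identity for the quadratic form of $A$. Since $S_2$ is a two-sided ideal in $\mathcal{B(H)}$, each $a_n\eta b_n$ lies in $S_2$ and $\eta^*a_n\eta b_n$ is trace class, so for $\eta\in\Span\{\varepsilon_{nm}\}$ I would compute
\begin{gather*}
\ip{\eta}{A\eta}_2=\tr{\eta^*A\eta}=\sum_n\tr{\eta^*a_n\eta b_n}\,,
\end{gather*}
the interchange of sum and trace being justified by \eqref{eq:condition-anbn} and the ideal property. Once selfadjointness/non-negativity of the $a_n,b_n$ is in force, I factor $a_n=(a_n^{1/2})^2$ and $b_n=(b_n^{1/2})^2$ through the non-negative square roots of the functional calculus; cyclicity of the trace then gives $\tr{\eta^*a_n\eta b_n}=\tr{(a_n^{1/2}\eta b_n^{1/2})^*(a_n^{1/2}\eta b_n^{1/2})}=\no{a_n^{1/2}\eta b_n^{1/2}}_2^2$, so the cornerstone of the whole proof is
\begin{gather*}
\ip{\eta}{A\eta}_2=\sum_n\no{a_n^{1/2}\eta b_n^{1/2}}_2^2\,.
\end{gather*}
I expect establishing this rigorously to be routine.

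For item~\eqref{it1:sA} I would avoid the square-root factorization and instead use $\overline{\tr M}=\tr{M^*}$ together with cyclicity: when $a_n,b_n$ are selfadjoint, $\overline{\tr{\eta^*a_n\eta b_n}}=\tr{b_n^*\eta^*a_n^*\eta}=\tr{\eta^*a_n\eta b_n}$, so each summand—and hence $\ip{\eta}{A\eta}_2$—is real, which is symmetry by \eqref{eq:non-negative-operator}. Item~\eqref{it2:sA} is then immediate from the master identity, each summand being a squared $S_2$-norm.

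For item~\eqref{it3:sA}, suppose $\ip{\eta}{A\eta}_2=0$. The master identity forces $a_n^{1/2}\eta b_n^{1/2}=0$ for every $n$. Since $a_n>0$ we have $\ker a_n^{1/2}=\ker a_n=\{0\}$, so $a_n^{1/2}$ is injective and $\eta b_n^{1/2}=0$, i.e. $\ran b_n^{1/2}\subseteq\ker\eta$. As $b_n^{1/2}$ is selfadjoint, $\overline{\ran b_n^{1/2}}=(\ker b_n)^\perp$, and taking closures and then orthogonal complements yields $(\ker\eta)^\perp\subseteq\ker b_n$ for all $n$; hence $(\ker\eta)^\perp\subseteq\cap_n\ker b_n=\{0\}$ and $\eta=0$. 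The interchanged version ($b_n>0$, $a_n\geq0$, $\cap_n\ker a_n=\{0\}$) follows by the symmetric argument: injectivity of $b_n^{1/2}$ gives dense range, forcing $a_n^{1/2}\eta=0$, whence $\ran\eta\subseteq\ker a_n$ for all $n$ and $\eta=0$.

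Item~\eqref{it4:sA} is where I expect the real work to lie. From $a_n\geq m_{a_n}\unit\geq m_a\unit$ the functional calculus gives $\no{a_n^{1/2}g}^2\geq m_a\no{g}^2$, so $\no{a_n^{1/2}\eta b_n^{1/2}}_2^2\geq m_a\no{\eta b_n^{1/2}}_2^2$ and therefore
\begin{gather*}
\ip{\eta}{A\eta}_2\geq m_a\sum_n\no{\eta b_n^{1/2}}_2^2=m_a\sum_n\tr{\eta^*\eta\,b_n}\,.
\end{gather*}
To conclude $m_A>0$ I must bound $\sum_n\tr{\eta^*\eta\,b_n}$ below by a fixed multiple of $\no{\eta}_2^2=\tr{\eta^*\eta}$, uniformly over $\no{\eta}_2=1$. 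The \emph{hard part} is exactly this uniform estimate: the hypothesis $\cap_n\ker b_n=\{0\}$ only guarantees $\sum_n\tr{\eta^*\eta\,b_n}>0$ for each $\eta\neq0$, not a positive lower bound independent of $\eta$, so one must exploit the joint structure of $\{b_n\}$ (through \eqref{eq:condition-anbn}) to control $\sum_n b_n$ as a form. Once $\sum_n\tr{\eta^*\eta\,b_n}\geq c\no{\eta}_2^2$ is secured, $m_A\geq m_a\,c>0$ gives positive definiteness, and the interchanged statement again follows by symmetry.
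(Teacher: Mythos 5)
Your treatment of items (i)--(iii) is correct and is essentially the paper's own argument: the same master identity $\ip{\eta}{A\eta}_2=\sum_n\no{a_n^{1/2}\eta b_n^{1/2}}_2^2$ drives (ii) and (iii), and your passage from $\eta b_n^{1/2}=0$ to $(\ker\eta)^\perp\subseteq\ker b_n$ for all $n$ is just the orthogonal-complement form of the paper's density argument $\overline{\Span_n\{\overline{\ran b_n}\}}=(\cap_n\ker b_n)^\perp=\mathcal H$. (Your remark that positivity of $a_n$ gives injectivity of $a_n^{1/2}$, rather than invertibility, is in fact cleaner than the paper's wording.)

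For item (iv) your proposal has a genuine gap --- you reduce the claim to the uniform estimate $\sum_n\tr{\eta^*\eta\,b_n}\geq c\no\eta_2^2$ and explicitly leave it unproved --- but your diagnosis of where the difficulty sits is exactly right, and it is worth recording that the paper's own proof founders at the same point. The paper writes, for the index $j$ with $\eta b_j\neq0$,
\begin{gather*}
\ip{\eta}{A\eta}_2\geq\no{a_j^{1/2}\eta b_j^{1/2}}_2^2=\sum_n\ip{\eta b_j^{1/2}u_n}{a_j\eta b_j^{1/2}u_n}\geq m_{a_j}\,,
\end{gather*}
and the last inequality silently uses $\no{\eta b_j^{1/2}}_2^2\geq1$, which does not follow from $\no\eta_2=1$. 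Indeed the uniform bound you are missing cannot be extracted from the stated hypotheses: take a single summand with $a_1=\unit$ (so $m_a=1$) and $b_1=\sum_k2^{-k}\varepsilon_{kk}\geq0$, which has $\ker b_1=\{0\}$; then $A\eta=\eta b_1$ satisfies all the hypotheses of item (iv), yet $\ip{\varepsilon_{1k}}{A\varepsilon_{1k}}_2=\tr{\varepsilon_{kk}b_1}=2^{-k}\to0$ while $\no{\varepsilon_{1k}}_2=1$, so $m_A=0$ and $A$ is positive but not positive definite. Thus no completion of your argument (or of the paper's) is possible without strengthening the hypothesis on $\{b_n\}$, e.g.\ to something of the form $\sum_nb_n\geq c\,\unit$ with $c>0$, under which your chain $\ip{\eta}{A\eta}_2\geq m_a\sum_n\tr{\eta^*\eta\,b_n}\geq m_a\,c\no\eta_2^2$ closes immediately.
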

\begin{proof}
If $a_1,\ldots,b_1,\ldots$, are selfadjoint, then it follows for $\eta\in\dom A$ that
\begin{gather*}
\ip{\eta}{A\eta}_2=\sum_n\tr{\eta^*a_n\eta b_n}=\sum_n\tr{(a_n\eta b_n)^*\eta}=\ip{A\eta}{\eta}_2\,,
\end{gather*}
which implies \eqref{it1:sA}. For $a_1,\ldots,b_1,\ldots\geq0$, one has
\begin{gather}\label{eq:nonneg-A}
\ip{\eta}{A\eta}_2=\sum_n\ip{\eta}{a_n^{1/2}a_n^{1/2}\eta b_n^{1/2}b_n^{1/2}}_2=\sum_n\no{a_n^{1/2}\eta b_n^{1/2}}_2^2\geq0\,,
\end{gather}
which implies \eqref{it2:sA}. Now for \eqref{it3:sA}, if we additionally suppose $a_1,\ldots>0$ and $\cap_n \ker b_n=\{0\}$, then from \cite[Lem.\,3.2.1.2]{MR1192782} and \cite[Prop.\,1.6]{MR2953553}, 
\begin{gather}\label{eq:span-bn-H}
\cc{\displaystyle\Span_n\lrb{\cc{\ran b_n}}}=\lrp{\cap_n\lrp{\cc{\ran b_n}}^\perp}^\perp=\lrp{\cap_n\ker b_n}^\perp=\mathcal H\,.
\end{gather}
Thus, for non-zero $\eta\in\dom A$, if $\ip{\eta}{A\eta}=0$ then one has by \eqref{eq:nonneg-A} that $\eta b_n=0$, since $a_n$ is invertible. So, $\eta\ran b_n=\{0\}$, for all $n\in\N$, and by \eqref{eq:span-bn-H},
\begin{gather*}
\{0\}=\eta\lrp{\cc{\displaystyle\Span_n\lrb{\cc{\ran b_n}}}}=\eta(\mathcal H)\,,
\end{gather*}
i.e., $\eta=0$, a contradiction. Hence, $\ip{\eta}{A\eta}>0$. To prove \eqref{it4:sA}, for $\eta\in\dom A$ non-zero, it follows by item \eqref{it3:sA} that there exits $j\in\N$ such that $\eta b_j\neq 0$. In this fashion, taking into account \eqref{eq:norm-HS} and \eqref{eq:nonneg-A}, one computes that 
\begin{align*}
\ip{\eta}{A\eta}_2&\geq\no{a_j^{1/2}\eta b_j^{1/2}}_2^2=\sum_n\ip{\eta b_j^{1/2}u_n}{a_j\eta b_j^{1/2}u_n}\geq m_{a_j}\geq m_a\,,
\end{align*}
whence it follows that $A$ is positive definite. 

The proof when $a_1,\ldots\geq 0$, with $\cap_n \ker a_n=\{0\}$ and $b_1,\ldots$ are positive (resp. positive definite) in item \eqref{it3:sA} (resp. \eqref{it4:sA}), follows the same above reasoning and taking adjoints. 
\end{proof}

We emphasize that Theorem~\ref{th:A-nonnegative}.\eqref{it3:sA} cannot be relaxed to 
\begin{gather*}
a_1,\ldots,b_1,\ldots\geq0\,,\quad \mbox{with}\quad \cap_n \ker a_n=\cap_n \ker b_n=\{0\}\,.
\end{gather*}
Indeed, for $n\in\N$, $\varepsilon_{nn}\geq0$ and $\cap_n \ker \varepsilon_{nn}=\{0\}$. However, for $h,k\in\N$,
\begin{gather*}
\ip{\varepsilon_{hk}}{\sum_{n}\varepsilon_{nn}\varepsilon_{hk}\varepsilon_{nn}}_2=\ip{\varepsilon_{hk}}{\varepsilon_{hk}\varepsilon_{hk}}_2=0\,,\qquad j\neq k\,.
\end{gather*}

\begin{rem}\label{rm:selfadjoint-Asuff} The operator $A$ in Theorem~\ref{th:A-nonnegative}.\eqref{it1:sA}-\eqref{it4:sA} is as well selfadjoint and bounded, if $\dom A=S_2$.
\end{rem}

\subsection{Non-negative selfadjoint operators in $\mathcal B(S_2)$ (sufficiency)}\label{ss-ssuff}

We shall first characterize the operators in $\mathcal B(S_2)$. Recall that the operators $A$ and $A^*$ belong to $\mathcal B(S_2)$ simultaneously.

\begin{lem}\label{lem:1std-ecomposition-A}
Every linear operator $A\in\mathcal B(S_2)$ is decomposed into
\begin{gather}\label{eq:A-decomposition}
A\eta=\sum_{n,m}\varepsilon_{nm}\eta a_{nm}\,,\qquad \eta\in S_2\,,
\end{gather}
where $a_{nm}\colonequals \sum_{j,k}\ip{\varepsilon_{nk}}{A\varepsilon_{mj}}_2\varepsilon_{jk}\in \mathcal{B(H)}$.
\end{lem}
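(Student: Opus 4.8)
The plan is to lean on the fact that $\{\varepsilon_{nm}\}$ is an orthonormal basis of $S_2$, together with the multiplication rule $\varepsilon_{nm}\varepsilon_{jk}=\delta_{mj}\varepsilon_{nk}$. First I would confirm that each $a_{nm}$ is genuinely a bounded operator on $\mathcal H$. Reading off its matrix in the canonical basis from the definition gives $\ip{e_j}{a_{nm}e_k}=\ip{\varepsilon_{nk}}{A\varepsilon_{mj}}_2$, and for finitely supported $f=\sum_j f_j e_j$, $g=\sum_k g_k e_k$ a short computation, using $\sum_j\overline{f_j}\varepsilon_{mj}=\Dn{e_m}{f}$ and the anti-linearity of $\ip{\cdot}{\cdot}_2$ in its left slot, yields the identity
\[
\ip{f}{a_{nm}g}=\ip{\Dn{e_n}{g}}{A\Dn{e_m}{f}}_2.
\]
Since $\no{\Dn{e_m}{f}}_2=\no f$ and $\no{\Dn{e_n}{g}}_2=\no g$, the Cauchy--Schwarz inequality gives $\abs{\ip{f}{a_{nm}g}}\le\no{A}\no f\no g$, so $a_{nm}\in\mathcal{B(H)}$ with $\no{a_{nm}}\le\no{A}$.

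Next I would establish the decomposition by matching Fourier coefficients against the basis $\{\varepsilon_{pq}\}$. A direct computation with the multiplication rule gives, for each $p,q$,
\[
\ip{\varepsilon_{pq}}{\varepsilon_{nm}\eta a_{nm}}_2=\delta_{pn}\ip{e_m}{\eta a_{pm}e_q},
\]
and, since $a_{pm}e_q=\sum_j\ip{\varepsilon_{pq}}{A\varepsilon_{mj}}_2\,e_j$ and $\ip{e_m}{\eta e_j}=\ip{\varepsilon_{mj}}{\eta}_2$, summing over $n,m$ turns the right-hand side into $\sum_{m,j}\ip{A^*\varepsilon_{pq}}{\varepsilon_{mj}}_2\ip{\varepsilon_{mj}}{\eta}_2$. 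This double series converges absolutely by Cauchy--Schwarz, because $\sum_{m,j}\abs{\ip{A^*\varepsilon_{pq}}{\varepsilon_{mj}}_2}^2=\no{A^*\varepsilon_{pq}}_2^2\le\no{A}^2$ and $\sum_{m,j}\abs{\ip{\varepsilon_{mj}}{\eta}_2}^2=\no\eta_2^2$; completeness of $\{\varepsilon_{mj}\}$ then collapses it to $\ip{A^*\varepsilon_{pq}}{\eta}_2=\ip{\varepsilon_{pq}}{A\eta}_2$. Thus every Fourier coefficient of the (so far formal) sum $\sum_{n,m}\varepsilon_{nm}\eta a_{nm}$ agrees with the corresponding coefficient of $A\eta$. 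As a sanity check, on a single basis vector the series telescopes to $\sum_n\varepsilon_{nq}a_{np}=A\varepsilon_{pq}$.

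The genuinely analytic point—and the main obstacle—is that coefficient-matching only pins down the limit once the series is known to converge in $\no\cdot_2$. I would resolve this via the orthogonality structure of the summands: since $\varepsilon_{mn}\varepsilon_{n'm'}=\delta_{nn'}\varepsilon_{mm'}$, one checks that $\ip{\varepsilon_{nm}\eta a_{nm}}{\varepsilon_{n'm'}\eta a_{n'm'}}_2=0$ whenever $n\neq n'$, so the blocks $G_n\ceq\sum_m\varepsilon_{nm}\eta a_{nm}$ are mutually orthogonal in $S_2$. Each $G_n=\Dn{e_n}{w_n}$ is rank one, and the computation above identifies its coefficients as $\ip{e_q}{w_n}=\overline{\ip{\varepsilon_{nq}}{A\eta}_2}$; hence $\no{G_n}_2^2=\no{w_n}^2=\sum_q\abs{\ip{\varepsilon_{nq}}{A\eta}_2}^2$ and therefore
\[
\sum_n\no{G_n}_2^2=\sum_{n,q}\abs{\ip{\varepsilon_{nq}}{A\eta}_2}^2=\no{A\eta}_2^2<+\infty .
\]
By orthogonality of the blocks this makes $\sum_n G_n=\sum_{n,m}\varepsilon_{nm}\eta a_{nm}$ convergent in $S_2$; its limit shares all Fourier coefficients with $A\eta$, so the limit equals $A\eta$, which is precisely~\eqref{eq:A-decomposition}. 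I expect the delicate part to be exactly this convergence/rearrangement step, which is why I would route it through the orthogonality of the $G_n$ and Parseval's identity for $A\eta$, rather than through a naive interchange of the iterated sums.
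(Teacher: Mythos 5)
Your proof is correct in substance but takes a genuinely different route from the paper's on both halves of the argument. For the boundedness of $a_{nm}$, the paper first shows $\dom a_{nm}=\mathcal H$ by exhibiting $a_{nm}f=\lrp{A^*\lrp{\sum_k\cc f_k\varepsilon_{nk}}}^*e_m$, then proves closedness via the identity $\varepsilon_{rn}A\varepsilon_{ms}=\varepsilon_{rs}a_{nm}$, and concludes with the closed graph theorem; your direct estimate $\ip{f}{a_{nm}g}=\ip{\Dn{e_n}{g}}{A\Dn{e_m}{f}}_2$ plus Cauchy--Schwarz is shorter and yields the quantitative bound $\no{a_{nm}}\leq\no A$, which the paper does not record. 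For the decomposition itself, the paper expands $\eta$ and $A\eta$ in the basis $\{\varepsilon_{nm}\}$ and performs a purely formal rearrangement of a sixfold sum with no discussion of convergence, whereas you match Fourier coefficients and then justify norm convergence through the mutual orthogonality of the row blocks $G_n$ together with Parseval for $A\eta$; this is a real gain in rigor, since \eqref{eq:A-decomposition} is only meaningful once the double series is known to converge in $\no\cdot_2$. One loose end remains on your side: orthogonality handles the outer sum over $n$, but each block $G_n=\sum_m\varepsilon_{nm}\eta a_{nm}$ is itself an infinite sum of operators all supported in the same row, so its convergence is not covered by orthogonality, and you identify the coefficients of $w_n$ only after tacitly assuming $G_n$ exists. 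This is easily repaired with the tools you already deploy: the same coefficient computation shows that for any finite $F\subset\N$ one has $\sum_{m\in F}\varepsilon_{nm}\eta a_{nm}=\varepsilon_{nn}A\lrp{\sum_{m\in F}\varepsilon_{mm}\eta}$, and since $\sum_{m\in F}\varepsilon_{mm}\eta\to\eta$ in $S_2$ and $A$ is bounded, the blocks converge with $G_n=\varepsilon_{nn}A\eta$, after which $\sum_nG_n=A\eta$ is immediate. With that sentence added, your argument is complete and strictly more careful than the one in the paper.
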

\begin{proof} We first show that $a_{nm}\in \mathcal{B(H)}$. Since  $A\in\mathcal B(S_2)$, it follows that  $A^*\in\mathcal B(S_2)$ and $A^*\varepsilon_{nk}=\sum_{j,s}\ip{\varepsilon_{sj}}{A^*\varepsilon_{nk}}_2\varepsilon_{sj}$. Thus, for $f=\sum_tf_te_t\in\mathcal H$, with $f_t\in\C$, one has that $\sum \cc f_k\varepsilon_{nk}\in S_2$, by virtue of $\no f=\no{\sum \cc f_k\varepsilon_{nk}}_2$. Thereby, $A^*(\sum \cc f_k\varepsilon_{nk})\in S_2\subset \mathcal{B(H)}$ and $(A^*(\sum \cc f_k\varepsilon_{nk}))^*\in \mathcal{B(H)}$. Thence, \begin{align*}
a_{nm}f&=\sum_{j,k,t}f_t\ip{\varepsilon_{nk}}{A\varepsilon_{mj}}\varepsilon_{jk}e_t=\sum_{j,k}f_k\ip{\varepsilon_{nk}}{A\varepsilon_{mj}}e_j\\
&=\sum_{j,s,k}f_k\ip{A^*\varepsilon_{nk}}{\varepsilon_{sj}}\varepsilon_{js}e_m=\lrp{A^*\lrp{\sum \cc f_k\varepsilon_{nk}}}^*e_m\in\mathcal H\,,
\end{align*}
whence it follows that $\dom a_{nm}=\mathcal H$. Now, $A\varepsilon_{ls}=\sum_{n,k}\ip{\varepsilon_{nk}}{A\varepsilon_{ls}}_2\varepsilon_{nk}$ and  
\begin{gather}\label{eq:bounded-anm}
\varepsilon_{rn}A\varepsilon_{ms}=\varepsilon_{rs}a_{nm}\,,\quad r,s,n,m\in\N\,.
\end{gather}
Since $\varepsilon_{rn}A\varepsilon_{ms}, \varepsilon_{rs}\in S_2\subset \mathcal{B(H)}$, for a sequence $\{f_t\}\subset\mathcal H$ such that $f_t\to f$ and $a_{nm}f_t\to g$, it follows by \eqref{eq:bounded-anm} that
\begin{align*}
\varepsilon_{rr}a_{nm}f&=\varepsilon_{rn}A\varepsilon_{mr}f=\lim_t\varepsilon_{rn}A\varepsilon_{mr}f_t\\
&=\lim_t\varepsilon_{rr}a_{nm}f_t=\varepsilon_{rr}g.
\end{align*}
Then, $a_{nm}f=\sum_{r}\varepsilon_{rr}a_{nm}f=\sum_{r}\varepsilon_{rr}g=g$. Hence, $a_{nm}$ is a closed operator defined in the whole space $\mathcal H$, viz. $a_{nm}\in\mathcal{B(H)}$. 

Now, if $\eta=\sum_{l,s}\eta_{ls}\varepsilon_{ls}\in S_2$, with $\eta_{ls}\in\C$, then
\begin{align*}
A\eta&=\sum_{l,s,n,k}\eta_{ls}\ip{\varepsilon_{nk}}{A\varepsilon_{ls}}_2\varepsilon_{nk}=\sum_{n,m,l,s,j,k}\varepsilon_{nm}\eta_{ls}\varepsilon_{ls}\ip{\varepsilon_{nk}}{A\varepsilon_{mj}}_2\varepsilon_{jk}\\
&=\sum_{n,m,l,s}\varepsilon_{nm}\eta_{ls}\varepsilon_{ls}a_{nm}=\sum_{n,m}\varepsilon_{nm}\eta a_{nm}\,,\end{align*}
which implies \eqref{eq:A-decomposition}. 
\end{proof}
\begin{rem}
The decomposition \eqref{eq:A-decomposition} is not unique. Indeed, one can also decompose any linear operator $A\in\mathcal B(S_2)$ into
\begin{gather*}
A\eta=\sum_{n,m}\hat a_{nm}\eta \varepsilon_{nm}\,,\qquad \eta\in S_2\,,
\end{gather*}
with $\hat a_{nm}=\sum_{j,k}\ip{\varepsilon_{jm}}{A\varepsilon_{kn}}_2\varepsilon_{jk}\in\mathcal{B(H)}$. 
\end{rem}

The proof of the following assertion is straightforward from Lemma~\ref{lem:1std-ecomposition-A}.

\begin{thm}\label{th2:general-decomosition-A} If $A\in\mathcal B(S_2)$ then there exist $\{a_n\},\{b_n\}\subset\mathcal{B(H)}$, such that
\begin{gather}\label{eq:general-decompositionA}
A\eta=\sum_{n}a_n\eta b_{n}\,,\qquad \eta\in S_2\,.
\end{gather}
\end{thm}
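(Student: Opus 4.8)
The goal is to show that the index-double decomposition of Lemma~\ref{lem:1std-ecomposition-A}, namely $A\eta=\sum_{n,m}\varepsilon_{nm}\eta\,a_{nm}$, can be repackaged as a single-index sum $A\eta=\sum_n a_n\eta b_n$ with $a_n,b_n\in\mathcal{B(H)}$. The plan is simply to reindex: the pairs $(n,m)\in\N\times\N$ form a countable set, so we fix any bijection $\N\to\N\times\N$, say $k\mapsto(n_k,m_k)$, and set $a_k\colonequals\varepsilon_{n_km_k}$ and $b_k\colonequals a_{n_km_k}$. Since each $\varepsilon_{nm}\in S_2\subset\mathcal{B(H)}$ and each $a_{nm}\in\mathcal{B(H)}$ by Lemma~\ref{lem:1std-ecomposition-A}, both families lie in $\mathcal{B(H)}$, and \eqref{eq:A-decomposition} becomes exactly \eqref{eq:general-decompositionA} after this relabeling. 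This is why the excerpt calls the proof ``straightforward from Lemma~\ref{lem:1std-ecomposition-A}.''

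Concretely, the steps I would carry out are, first, to invoke Lemma~\ref{lem:1std-ecomposition-A} to obtain the representation $A\eta=\sum_{n,m}\varepsilon_{nm}\eta\,a_{nm}$ together with the membership $a_{nm}\in\mathcal{B(H)}$; second, to record that $\varepsilon_{nm}\in\mathcal{B(H)}$ as well, being a rank-one operator; and third, to flatten the double index into a single one via a fixed enumeration of $\N\times\N$. Setting $a_n\colonequals\varepsilon_{n_nm_n}$ and $b_n\colonequals a_{n_nm_n}$ (abusing notation slightly for the enumeration) yields $\{a_n\},\{b_n\}\subset\mathcal{B(H)}$ and \eqref{eq:general-decompositionA} on $\Span\{\varepsilon_{nm}\}$, and hence on all of $S_2$ by the density of $\Span\{\varepsilon_{nm}\}$ and the boundedness of $A$.

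The only point requiring a moment's care is the convergence of the reindexed series. The original double sum in \eqref{eq:A-decomposition} converges in $S_2$ for each $\eta$, and because the terms are unconditionally summable in the Hilbert space $S_2$ (the expansion is derived from orthonormal-basis coefficients of $A\eta$ against the $\varepsilon_{nk}$), any reordering by a bijection of the index set converges to the same limit. Thus the single-index series $\sum_n a_n\eta b_n$ converges in $S_2$ to $A\eta$ independently of the chosen enumeration. I expect no genuine obstacle here: the content has already been done in Lemma~\ref{lem:1std-ecomposition-A}, and the present statement is a cosmetic collapse of two indices into one, so the proof amounts to fixing a bijection $\N\to\N\times\N$ and verifying that this does not disturb convergence.
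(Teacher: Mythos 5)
Your proposal is correct and follows exactly the route the paper intends: the paper's proof consists solely of the remark that the theorem is ``straightforward from Lemma~\ref{lem:1std-ecomposition-A}'', and the intended content is precisely your reindexing of the double sum $A\eta=\sum_{n,m}\varepsilon_{nm}\eta\,a_{nm}$ into a single-index sum via a bijection $\N\to\N\times\N$. Your additional attention to the unconditional convergence of the reordered series is a point the paper glosses over entirely, so if anything your write-up is more careful than the original.
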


\begin{rem}\label{rm:sum-LI}
If \eqref{eq:general-decompositionA} is such that 
\begin{align}\label{eq:sum-LI}
A\eta=\sum_{n=1}^ma_n\eta b_{n}\,,
\end{align}
then $\{a_n\}_{n=1}^m$ and $\{b_n\}_{n=1}^m$ can be assumed linearly independent (briefly l.i.).
Indeed, if $\{a_n\}_{n=1}^m$ is not l.i., then it contains a l.i. subset, namely $\{a_n\}_{n=1}^r$, for which $a_j=\sum_{n=1}^r\alpha_{jn}a_n$, for $j=r+1,\ldots,m$, and
\begin{gather*}
A\eta=\sum_{n=1}^ra_n\eta\lrp{b_{n}+\sum_{j=r+1}^m\alpha_{jn}b_j}\,.
\end{gather*}
Thus, we may suppose that $\{a_n\}_{n=1}^m$ in \eqref{eq:sum-LI} is linearly independent and by above, there exists a l.i. subset of $\{b_n\}_{n=1}^m$, namely $\{b_n\}_{n=1}^r$, such that $b_j=\sum_{n=1}^r\beta_{jn}a_n$, for $j=r+1,\ldots,m$, and
\begin{gather*}
A\eta=\sum_{n=1}^r\lrp{a_{n}+\sum_{j=r+1}^m\beta_{jn}a_j}\eta b_n\,,
\end{gather*}
where it is simple to check that $\{a_{n}+\sum_{j=r+1}^m\beta_{jn}a_j\}_{n=1}^r$ is l.i.
\end{rem}

In what follows, we shall describe selfadjoint operators in $\mathcal B(S_2)$. To do so, we first describe the adjoint of an operator in  $\mathcal B(S_2)$.

\begin{lem}\label{lem:adjoint-A}
For an operator $A$ in $\mathcal B(S_2)$, such that $A\eta=\sum_na_n\eta b_n$, with $\{a_n\},\{b_n\}\subset \mathcal{B(H)}$, it follows that $\sum_na_n^*\eta b_n^*\in S_2$ and
\begin{gather}\label{eq:adjoint-A}
A^*\eta=\sum_na_n^*\eta b_n^*\,,\qquad \eta\in S_2\,.
\end{gather}
\end{lem}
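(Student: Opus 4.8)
The plan is to verify that the operator $B\eta \ceq \sum_n a_n^* \eta b_n^*$ is precisely the adjoint $A^*$ by checking the defining relation $\ip{B\eta}{\tau}_2 = \ip{\eta}{A\tau}_2$ on a dense set, namely on the canonical basis elements $\varepsilon_{jl}$, and then extending by continuity. First I would confirm that $B$ maps $S_2$ into $S_2$: since $A \in \mathcal B(S_2)$ implies $A^* \in \mathcal B(S_2)$ (as noted in the excerpt before Lemma~\ref{lem:1std-ecomposition-A}), the genuine adjoint exists in $\mathcal B(S_2)$, so it suffices to show the formal expression $\sum_n a_n^* \eta b_n^*$ agrees with it. The key computational identity is the cyclicity and conjugation behaviour of the trace together with the fact that $(a_n \tau b_n)^* = b_n^* \tau^* a_n^*$, so that for $\eta,\tau \in S_2$ one has
\begin{gather*}
\ip{\eta}{A\tau}_2 = \tr{\eta^* \textstyle\sum_n a_n \tau b_n} = \sum_n \tr{\eta^* a_n \tau b_n} = \sum_n \tr{(a_n^* \eta b_n^*)^* \tau} = \ip{\textstyle\sum_n a_n^* \eta b_n^*}{\tau}_2.
\end{gather*}

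The main technical obstacle is justifying the interchange of the trace with the (possibly infinite) sum over $n$, and ensuring each step is legitimate when the series $A\eta = \sum_n a_n \eta b_n$ need not converge absolutely in operator norm. To handle this cleanly I would first establish the result for a \emph{finite} sum $A\eta = \sum_{n=1}^m a_n \eta b_n$, where every manipulation above is a genuine finite sum and the trace identity $\tr{\eta^* a \tau b} = \tr{(a^* \eta b^*)^* \tau}$ is elementary from $\tr{XY}=\tr{YX}$ and $\tr{X^*}=\overline{\tr X}$ applied to Hilbert--Schmidt operators. This finite case already identifies $A^*\eta = \sum_{n=1}^m a_n^* \eta b_n^*$ by the defining property of the adjoint together with its uniqueness.

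For the general (countable) case, I would pass to finite truncations $A_m\eta \ceq \sum_{n=1}^m a_n \eta b_n$ and exploit convergence in $\mathcal B(S_2)$. Since $A \in \mathcal B(S_2)$ means the series converges in the appropriate (strong or weak) sense on $S_2$, one has $\ip{\eta}{A\tau}_2 = \lim_m \ip{\eta}{A_m\tau}_2 = \lim_m \ip{A_m^*\eta}{\tau}_2$, using the finite-case identity $A_m^*\eta = \sum_{n=1}^m a_n^*\eta b_n^*$. Because $A_m^* \to A^*$ in the corresponding topology (adjoint is continuous for weak/strong-$*$ limits on the bounded set, or one argues directly that the partial sums $\sum_{n=1}^m a_n^*\eta b_n^*$ form a Cauchy sequence in $S_2$ testing against $\tau$), the limit equals $\ip{A^*\eta}{\tau}_2$. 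Matching the two expressions for all $\tau$ in the dense span of $\{\varepsilon_{jl}\}$ forces $A^*\eta = \sum_n a_n^*\eta b_n^*$, which in particular shows the right-hand series lies in $S_2$ as claimed.

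The most delicate point, deserving care in the write-up, is the \emph{membership} assertion $\sum_n a_n^* \eta b_n^* \in S_2$ for the infinite sum: rather than proving convergence of this series a priori, the cleanest route is to deduce it a posteriori from the identity with the genuine adjoint $A^*\eta \in S_2$. Thus the logical order is (i) existence of $A^* \in \mathcal B(S_2)$, (ii) the trace identity on basis elements identifying its action with the formal series, and (iii) reading off convergence in $S_2$ from that identification, thereby avoiding any independent summability estimate for $\{a_n^*\eta b_n^*\}$.
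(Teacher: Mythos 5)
Your proposal is correct and follows essentially the same route as the paper: both arguments start from the existence of $A^*\in\mathcal B(S_2)$, apply the elementary adjoint/trace identity to the finite truncations $\sum_{n=1}^{s}a_n^*\eta b_n^*$, pass to the limit by testing against elements of $S_2$, and only then read off that the series converges (weakly) to $A^*\eta$ in $S_2$ rather than proving summability a priori. The paper tests against arbitrary $\rho\in S_2$ instead of basis elements, but this is an immaterial difference.
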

\begin{proof}
Since  $A\in\mathcal B(S_2)$, one has that  $A^*\in\mathcal B(S_2)$. Besides, for $s\in\N$ and $\eta,\rho\in S_2$, it follows  that
$\tau_s=\sum_{n=1}^sa_n^*\eta b_n^*\in S_2$ and 
\begin{gather}\label{eq:wc-Aadjoint}
\lim_s\ip{\rho}{\tau_s}_2=\lim_s\ip{\sum_{n=1}^sa_n\rho b_n}{\eta}_2=\ip{A\rho}{\eta}_2=\ip{\rho}{A^*\eta}_2\,.
\end{gather}
So, $\{\tau_s\}$ converges weakly to $A^*\eta$, viz. $\{\tau_s\}\subset S_2$ is a bounded sequence and thereby $\sum_n a_n^*\eta b_n^*\in S_2$. Therefore, \eqref{eq:wc-Aadjoint} yields $\ip{\rho}{\sum_n a_n^*\eta b_n^*}_2=\ip{\rho}{A^*\eta}_2$, for all $\rho\in S_2$, which implies \eqref{eq:adjoint-A}.
\end{proof}

It follows by Remark~\ref{rm:selfadjoint-Asuff} that if $A\in \mathcal B(S_2)$, for which $A\eta=\sum_na_n\eta b_n$, with $a_1,\ldots,b_1,\ldots \mathcal{B(H)}$ selfadjoint, then $A$ is selfadjoint. The following asserts the converse statement.  

\begin{thm}\label{th:selfadjoint-decomposition}
If $A$ is a selfadjoint operator in $\mathcal B(S_2)$ then there exist selfadjoint operators $a_1,\ldots,b_1,\ldots\in\mathcal{B(H)}$, such that
\begin{gather}\label{eq:selfadjoint-decomposition}
A\eta=\sum_{n}a_n\eta b_{n}\,,\qquad \eta\in S_2\,.
\end{gather}
\end{thm}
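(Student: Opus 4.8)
The plan is to start from the general representation of $A$ guaranteed by Theorem~\ref{th2:general-decomosition-A} and then symmetrize it, exploiting that $A=A^*$ together with the explicit adjoint formula from Lemma~\ref{lem:adjoint-A}. Concretely, Theorem~\ref{th2:general-decomosition-A} furnishes (not necessarily selfadjoint) operators $\{a_n\},\{b_n\}\subset\mathcal{B(H)}$ with $A\eta=\sum_n a_n\eta b_n$ for every $\eta\in S_2$, and Lemma~\ref{lem:adjoint-A} then gives $A^*\eta=\sum_n a_n^*\eta b_n^*$. Since $A$ is selfadjoint, averaging the two representations yields $A\eta=\tfrac12\sum_n\lrp{a_n\eta b_n+a_n^*\eta b_n^*}$ for all $\eta\in S_2$.

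The next step is to split each coefficient into its Cartesian parts. Writing $a_n=\alpha_n+i\beta_n$ and $b_n=\gamma_n+i\delta_n$ with $\alpha_n=\tfrac12(a_n+a_n^*)$, $\beta_n=\tfrac1{2i}(a_n-a_n^*)$, $\gamma_n=\tfrac12(b_n+b_n^*)$, $\delta_n=\tfrac1{2i}(b_n-b_n^*)$ all selfadjoint in $\mathcal{B(H)}$, a direct expansion of $a_n\eta b_n+a_n^*\eta b_n^*$ cancels the cross terms and leaves $2\lrp{\alpha_n\eta\gamma_n-\beta_n\eta\delta_n}$. Hence $A\eta=\sum_n\lrp{\alpha_n\eta\gamma_n-\beta_n\eta\delta_n}$, and since $-\beta_n$ is again selfadjoint, absorbing the sign and relabelling over the doubled index set (for instance $\tilde a_{2n-1}=\alpha_n$, $\tilde b_{2n-1}=\gamma_n$, $\tilde a_{2n}=-\beta_n$, $\tilde b_{2n}=\delta_n$) produces selfadjoint families $\{\tilde a_n\},\{\tilde b_n\}$ realizing the desired form~\eqref{eq:selfadjoint-decomposition}.

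These computations are elementary once convergence is under control, so the only point demanding care — and what I regard as the main (if mild) obstacle — is the handling of the operator-valued series in $S_2$. I would therefore argue at the level of partial sums: the $N$-th partial sum of $\tfrac12\sum_n(a_n\eta b_n+a_n^*\eta b_n^*)$ equals $\sum_{n=1}^N\lrp{\alpha_n\eta\gamma_n-\beta_n\eta\delta_n}$ identically, so no rearrangement of a conditionally convergent series is ever invoked. The regrouping merely recombines the two series $\sum_n a_n\eta b_n$ and $\sum_n a_n^*\eta b_n^*$, both of which converge in $S_2$ by Theorem~\ref{th2:general-decomosition-A} and Lemma~\ref{lem:adjoint-A}; their average therefore converges to $\tfrac12(A\eta+A^*\eta)=A\eta$. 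Passing to the paired ordering of the doubled index set preserves these partial sums, so $A\eta=\sum_n\tilde a_n\eta\tilde b_n$ with selfadjoint factors, completing the proof.
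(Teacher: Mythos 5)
Your argument is correct, and it reaches the conclusion by a route that is genuinely different from the paper's. The paper does not symmetrize an arbitrary representation: it works with the canonical decomposition $A\eta=\sum_{n,m}\varepsilon_{nm}\eta a_{nm}$ of Lemma~\ref{lem:1std-ecomposition-A}, observes that selfadjointness of $A$ forces the symmetry relation $a_{nm}^*=a_{mn}$, and then pairs the $(n,m)$ term with the $(m,n)$ term to produce a single summand $\hat\varepsilon_{nm}\eta\hat a_{nm}$ with $\hat\varepsilon_{nm}=\frac{1+i}{2}\varepsilon_{nm}+\frac{1-i}{2}\varepsilon_{mn}$ and $\hat a_{nm}=\frac{1-i}{2}a_{nm}+\frac{1+i}{2}a_{mn}$ both selfadjoint. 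You instead start from an arbitrary representation $A\eta=\sum_n a_n\eta b_n$ supplied by Theorem~\ref{th2:general-decomosition-A}, average with the adjoint formula of Lemma~\ref{lem:adjoint-A}, and split each factor into Cartesian parts; the identity $\tfrac12(a\eta b+a^*\eta b^*)=\alpha\eta\gamma-\beta\eta\delta$ for $a=\alpha+i\beta$, $b=\gamma+i\delta$ is correct and does all the work. Your route is more elementary and applies to any representation of $A$, at the cost of doubling the number of summands; the paper's route keeps one summand per index pair and, more importantly, produces the specific selfadjoint left factors $\hat\varepsilon_{nm}$ that are reused later (Proposition~\ref{prop:positive-decompositionA}, Lemma~\ref{lem:1st-aproox-PD}), so its extra structure is not gratuitous.

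One small caveat on your convergence discussion: after relabelling over the doubled index set, your partial-sum argument controls only the even partial sums $\sum_{k\le 2N}\tilde a_k\eta\tilde b_k$; the odd partial sums carry the dangling term $\alpha_N\eta\gamma_N=\tfrac14\lrp{a_N\eta b_N+a_N\eta b_N^*+a_N^*\eta b_N+a_N^*\eta b_N^*}$, whose cross terms are not known to tend to zero in $S_2$. The series in \eqref{eq:selfadjoint-decomposition} should therefore be understood as summed in pairs. This is consistent with the level of precision of the paper's own concluding ``rearranging and renaming'' step, but it is worth stating explicitly.
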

\begin{proof}
It follows by Lemma~\ref{lem:1std-ecomposition-A} that $A\eta=\sum_{n,m}\varepsilon_{nm}\eta a_{nm}$, for $\eta\in S_2$, with $a_{nm}=\sum_{j,k}\ip{\varepsilon_{nk}}{A\varepsilon_{mj}}_2\varepsilon_{jk}\in\mathcal{B(H)}$. Besides, since $A$ is selfadjoint, 
\begin{gather}\label{eq:adjoint-anm}
a_{nm}^*=\sum_{j,k}\ip{A\varepsilon_{mj}}{\varepsilon_{nk}}_2\varepsilon_{jk}^*=\sum_{j,k}\ip{\varepsilon_{mj}}{A\varepsilon_{nk}}_2\varepsilon_{kj}=a_{mn}\,.
\end{gather}
In this fashion, Lemma~\ref{lem:adjoint-A} and \eqref{eq:adjoint-anm} imply

\begin{align}\label{eq:selfadjoint-decompo-byAmBm}
\begin{split}
A\eta&=\frac12\lrp{A\eta+A^*\eta}-\frac1{2i}\lrp{\sum_{n,m}\varepsilon_{mn}\eta a_{nm}-\sum_{n,m}\varepsilon_{mn}\eta a_{nm}}\\
&=\frac12\sum_{n,m}(\varepsilon_{nm}\eta a_{nm}+\varepsilon_{mn}\eta a_{mn})-\frac1{2i}\sum_{n,m}(\varepsilon_{nm}\eta a_{mn}-\varepsilon_{mn}\eta a_{nm})\\
&=\sum_{n,m}\lrp{\frac{1+i}{2}\varepsilon_{nm}+\frac{1-i}{2}\varepsilon_{mn}}\eta \lrp{\frac{1-i}{2}a_{nm}+\frac{1+i}{2}a_{mn}}\,,
\end{split}
\end{align}
where clearly $\frac{1+i}{2}\varepsilon_{nm}+\frac{1-i}{2}\varepsilon_{mn}$ and $\frac{1-i}{2}a_{nm}+\frac{1+i}{2}a_{mn}$ are selfadjoint in $\mathcal{B(H)}$. By rearranging and renaming \eqref{eq:selfadjoint-decompo-byAmBm}, one arrives at \eqref{eq:selfadjoint-decomposition}.
\end{proof}

\begin{rem} One can follow the same reasoning of Remark~\ref{rm:sum-LI} to show that if \eqref{eq:selfadjoint-decomposition} is such that 
\begin{align*}
A\eta=\sum_{n=1}^ma_n\eta b_{n}\,,
\end{align*}
then the sets of selfadjoint operators $\{a_n\}_{n=1}^m$ and $\{b_n\}_{n=1}^m$ can be assumed l.i. This, bearing in mind that $\sum_{n=1}^m\alpha_{n}a_n$, with $\alpha_{n}\in\C$, is selfadjoint if and only if $\alpha_{n}\in\R$, for $n=1,\ldots,m$.
\end{rem}

The set of selfadjoint operators $\{\hat\varepsilon_{nm}\}$  where
\begin{gather*}
\hat\varepsilon_{nm}\colonequals\frac{1+i}{2}\varepsilon_{nm}+\frac{1-i}{2}\varepsilon_{mn}\in S_2\,,
\end{gather*}
is an orthonormal basis for $S_2$. Indeed, one computes for $h,k,n,m\in\N$ that 
\begin{align*}
\ip{\hat\varepsilon_{nm}}{\hat\varepsilon_{hk}}_2&=\frac12\ip{\varepsilon_{nm}}{\varepsilon_{hk}}_2-\frac i2\ip{\varepsilon_{nm}}{\varepsilon_{kh}}_2+\frac i2\ip{\varepsilon_{mn}}{\varepsilon_{hk}}_2+\frac12\ip{\varepsilon_{mn}}{\varepsilon_{kh}}_2\\
&=\frac12\delta_{nh}\delta_{mk}-\frac i2\delta_{nk}\delta_{mh}+\frac i2\delta_{mh}\delta_{nk}+\frac12\delta_{mk}\delta_{nh}=\delta_{nh}\delta_{mk}\,.
\end{align*}
Besides, this orthonormal basis satisfies for $n,m\in\N$ that 
\begin{align}\label{it1-saB}
\begin{split}
\hat\varepsilon_{nn}&=\varepsilon_{nn}\\
\ran\varepsilon_{nn}, \ran\varepsilon_{mm}&\subset \ran \hat\varepsilon_{nm}\\
\cap_n \ker \hat\varepsilon_{nm}&=\cap_m \ker \hat\varepsilon_{nm}=\{0\}
\end{split}
\end{align}

In what follows, we address the case when $A\in\mathcal B(S_2)$ is non-negative. In this instance, $A$ satisfies Theorem~\ref{th:selfadjoint-decomposition}, and we wonder if the operators $a_1,\ldots,b_1,\ldots$ in the decomposition \eqref{eq:selfadjoint-decomposition} can be also considered non-negative.

\begin{thm}\label{th:one-sum-Ap} Let $A$ be a  non-zero operator in $\mathcal B(S_2)$ given by $A\eta=a\eta b$, with $a,b\in\mathcal{B(H)}$. If $A\geq0$ then there exist $\hat a,\hat b\geq0$ in $\mathcal{B(H)}$, such that 
\begin{gather}\label{eq:AnN-oneS}
A\eta=\hat a\eta\hat b\,,\quad \eta\in S_2\,.
\end{gather}
Besides, 
$\hat a,\hat b$ are positive or positive definite if $A$ is positive or positive definite, respectively.
\end{thm}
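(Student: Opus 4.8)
The plan is to reduce the hypothesis $A\geq 0$ to a pointwise statement about the quadratic forms of $a$ and $b$, and then to extract the non-negative factors by absorbing a single cancelling phase.

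First I would test $A$ on the rank-one operators $\eta=\Dn{f}{g}\in S_2$, for $f,g\in\mathcal H$. Using $a\Dn{f}{g}b=\Dn{af}{b^*g}$ and $\tr{\Dn{u}{v}}=\ip{v}{u}$, a direct computation gives
\begin{gather*}
\ip{\eta}{A\eta}_2=\tr{\eta^* a\eta b}=\ip{f}{af}\,\ip{g}{bg}\,.
\end{gather*}
Thus $A\geq 0$ forces $\ip{f}{af}\,\ip{g}{bg}\geq 0$ for all $f,g\in\mathcal H$; in particular every such product is a non-negative real number. Since $A\neq 0$, neither $a$ nor $b$ vanishes, so over $\C$ there are vectors on which their quadratic forms do not vanish.

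The crux is the phase argument. Fix $g_0$ with $c\colonequals\ip{g_0}{bg_0}\neq 0$ and write $c=\abs{c}e^{i\theta}$. From $c\,\ip{f}{af}\geq 0$ for every $f$ I obtain $\ip{f}{e^{i\theta}af}\geq 0$; in particular $\ip{f}{e^{i\theta}af}\in\R$ for all $f$, and over a complex Hilbert space a real quadratic form determines a selfadjoint operator (polarization), so $e^{i\theta}a$ is selfadjoint and hence $e^{i\theta}a\geq 0$. Running the symmetric argument with a vector $f_0$ for which $\ip{f_0}{af_0}\neq 0$---whose argument is necessarily $-\theta$, since $e^{i\theta}a\geq 0$ and is nonzero---yields $e^{-i\theta}b\geq 0$. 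Setting $\hat a\colonequals e^{i\theta}a\geq 0$ and $\hat b\colonequals e^{-i\theta}b\geq 0$, the two phases cancel and $A\eta=\hat a\eta\hat b$, which is \eqref{eq:AnN-oneS}.

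It remains to transfer definiteness to the factors. If $A>0$ but $\hat a$ had a nonzero kernel vector $f_0$, then for any $g\neq 0$ the nonzero operator $\eta=\Dn{f_0}{g}$ would satisfy $A\eta=\Dn{\hat a f_0}{\hat b g}=0$, contradicting $\ip{\eta}{A\eta}_2>0$; hence $\ker\hat a=\{0\}$, which together with $\hat a\geq 0$ gives $\hat a>0$, and symmetrically $\hat b>0$. If $A$ is positive definite with g.l.b.\ $m_A>0$, then evaluating on unit rank-one operators $\eta=\Dn{f}{g}$ (which satisfy $\no\eta_2=\no f\,\no g=1$) gives $\ip{f}{\hat a f}\,\ip{g}{\hat b g}\geq m_A$ for all unit $f,g$; fixing a unit $g_0$ and taking the infimum over unit $f$ bounds $m_{\hat a}\geq m_A/\ip{g_0}{\hat b g_0}>0$, and symmetrically $m_{\hat b}>0$, so both factors are positive definite. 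The main obstacle I anticipate is the phase step: one must use $A\neq 0$ to guarantee nonzero numerical ranges and a well-defined $\theta$, and to invoke that a real quadratic form over $\C$ comes from a selfadjoint operator.
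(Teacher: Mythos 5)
Your proposal is correct and follows essentially the same route as the paper: evaluate the form on rank-one operators to get $\ip{\eta}{A\eta}_2=\ip{f}{af}\ip{g}{bg}\geq 0$, then absorb a single complex scalar (you use the unimodular phase $e^{\pm i\theta}$ where the paper uses $\alpha$ and $1/\alpha$, an inessential difference) to make both factors non-negative, and read off positivity and positive definiteness from the same identity. Your treatment of the definite case is in fact slightly more detailed than the paper's, which leaves that step as ``straightforward to check.''
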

\begin{proof}
Since $A\neq0$, one has that $a,b\neq 0$. Besides,
\begin{gather}\label{eq:ip-AnonN-onesum}
0\leq\ip{\vk{g}\vb {f}}{A\vk{g}\vb {f}}_2=\ip{g}{a g}\ip{f}{bf}\,,\quad f,g\in\mathcal H\,.
\end{gather}
Let $f_0\in\mathcal H$ such that $\ip{f_0}{bf_0}=\alpha\neq 0$. Thus, one obtains by \eqref{eq:ip-AnonN-onesum} that $\ip{g}{\alpha ag}\geq0$, i.e., $\hat a=\alpha a\geq 0$ and \eqref{eq:AnN-oneS}, where $\hat b=b/\alpha\geq0$, since for $g_0\in\mathcal H$ such that $\ip{g_0}{\hat ag_0}=\beta> 0$, one has on account \eqref{eq:ip-AnonN-onesum} that $\beta\langle{f},{\hat bf}\rangle\geq0$. It is straightforward to check from the equality of \eqref{eq:ip-AnonN-onesum} that $\hat a,\hat b$ are positive or positive definite, if $A$ is positive or positive definite, respectively.
\end{proof}
We obtain an analogous result of the last theorem for the case where $A$ is positive definite and consists of exactly two summands.

\begin{thm}\label{th:two-sum-Ap} Let $A\in\mathcal B(S_2)$ given by 
\begin{gather}\label{eq:decm-two-sums-A}
A\eta=a_1\eta b_1+a_2\eta b_2\,, \mbox{with } a_1,a_2,b_1,b_2\in\mathcal{B(H)}\,.
\end{gather}
If $A$ is positive definite then there exist $\hat a_1,\hat a_2\in\mathcal{B(H)}$ non-negative, whit $\ker a_1\cap\ker a_2=\{0\}$, and $b_1,b_2\in \mathcal{B(H)}$ positive definite, such that 
\begin{gather}\label{eq:AnN-twoS}
A\eta=\hat a_1\eta \hat b_1+\hat a_2\eta \hat b_2\,,\quad \eta\in S_2\,.
\end{gather}
\end{thm}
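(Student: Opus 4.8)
The plan is to reduce to a two--term \emph{selfadjoint} decomposition, read off a scalar positivity condition by testing on rank--one operators, and then solve a planar convex--geometry problem that relocates the positivity onto the individual factors.

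\emph{Reduction.} First I would invoke Remark~\ref{rm:sum-LI} to assume $\{a_1,a_2\}$ and $\{b_1,b_2\}$ linearly independent. If this collapses $A$ to a single term $a\eta b$, then Theorem~\ref{th:one-sum-Ap} produces positive definite $\hat a,\hat b$ with $A\eta=\hat a\eta\hat b$, and writing $A\eta=\hat a\,\eta\,(\tfrac12\hat b)+\hat a\,\eta\,(\tfrac12\hat b)$ already gives the desired form, the kernel condition being trivial since $\hat a>0$. So assume genuinely two terms, whence the ``left'' span $\Span\{a_1,a_2\}$ and ``right'' span $\Span\{b_1,b_2\}$ are two--dimensional and, by the uniqueness of these support spaces for a two--term elementary decomposition, intrinsic to $A$. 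Since $A$ is selfadjoint, Lemma~\ref{lem:adjoint-A} gives $A\eta=a_1^{\ast}\eta b_1^{\ast}+a_2^{\ast}\eta b_2^{\ast}$ as well, so both support spaces are $\ast$--closed; a two--dimensional $\ast$--closed subspace has a selfadjoint basis, the coefficient matrix of $A$ in such bases is real, and absorbing it into the left factors keeps them selfadjoint. Thus I may assume $a_1,a_2,b_1,b_2$ selfadjoint and linearly independent.

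\emph{Testing on rank--ones.} For $\eta=\Dn{g}{f}$ one computes
\[
\ip{\Dn{g}{f}}{A\Dn{g}{f}}_2=\ip{g}{a_1g}\ip{f}{b_1f}+\ip{g}{a_2g}\ip{f}{b_2f}.
\]
With $a_i,b_i$ selfadjoint, the pairs $\phi(g)=(\ip{g}{a_1g},\ip{g}{a_2g})$ and $\psi(f)=(\ip{f}{b_1f},\ip{f}{b_2f})$ are real, and positive definiteness, $\ip{\eta}{A\eta}_2\ge m_A\no\eta_2^2$ with $m_A>0$, reads $\phi(g)\cdot\psi(f)\ge m_A$ for unit $f,g$. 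The set $\{\phi(g):\no g=1\}$ is the numerical range of $a_1+ia_2$, hence convex by Toeplitz--Hausdorff; its closure is compact, avoids the origin (else the bound fails), and generates a solid pointed cone $C_a$ in $\mathbb R^2$ (linear independence excludes a single ray), and likewise $C_b$. The inequality then says exactly $\overline{C_a}\subset\operatorname{int}\overline{C_b}^{\,\ast}$ and, symmetrically, $\overline{C_b}\subset\operatorname{int}\overline{C_a}^{\,\ast}$, where $\ast$ denotes the dual cone.

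\emph{Relocating positivity.} I would take $\hat b_1,\hat b_2$ to be the combinations $\lambda b_1+\mu b_2$ whose coefficient vectors $(\lambda,\mu)$ lie on the two extreme rays of $\overline{C_a}$. Since these vectors lie in $\operatorname{int}\overline{C_b}^{\,\ast}$, the form $\ip{f}{\hat b_if}$ is bounded below by a positive constant on the compact set $\{\psi(f):\no f=1\}$, so $\hat b_1,\hat b_2$ are positive definite. Let $\hat a_1,\hat a_2$ have as coefficient vectors the basis of $\mathbb R^2$ dual to those two vectors; these dual vectors fall on the edges of $\overline{C_a}^{\,\ast}$, whence $\ip{g}{\hat a_ig}\ge0$, i.e. $\hat a_1,\hat a_2\ge0$. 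The duality relation makes the product of the two transition matrices the identity, so indeed $A\eta=\hat a_1\eta\hat b_1+\hat a_2\eta\hat b_2$. Finally $\ker\hat a_1\cap\ker\hat a_2=\{0\}$, for a common nonzero null vector $g$ would force $\ip{g}{\hat a_1g}=\ip{g}{\hat a_2g}=0$, hence $\ip{\Dn{g}{f}}{A\Dn{g}{f}}_2=0$, contradicting positive definiteness.

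\emph{Main obstacle.} The crux is the planar dual--cone construction together with upgrading ``non-negative'' to ``positive definite'' for the $\hat b_i$ in infinite dimensions: this is where I must use the uniform gap $m_A>0$ and the compactness of the closed joint numerical ranges, rather than mere positivity. A secondary technical point is justifying the selfadjoint reduction via the uniqueness of the support spaces of a two--term elementary decomposition.
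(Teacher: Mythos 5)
Your argument is essentially correct, but it follows a genuinely different route from the paper. The paper never reduces to selfadjoint factors and never invokes convexity of numerical ranges: starting from the same rank--one identity $\ip{\Dn{g}{f}}{A\Dn{g}{f}}_2=\ip{g}{a_1g}\ip{f}{b_1f}+\ip{g}{a_2g}\ip{f}{b_2f}\geq m_A$, it fixes $g_0$ with $\alpha_i=\ip{g_0}{a_ig_0}$, $\alpha_1\neq0$, observes that $\alpha_1b_1+\alpha_2b_2$ is automatically positive definite (selfadjointness of the combination comes for free from the real lower bound), rewrites $A$ accordingly, and then iterates: it locates the critical parameter $t_0=\sup\{t\geq0:\,m_{b_2-tb_1}>0\}$, uses a minimizing sequence $f_j$ with $\ip{f_j}{(b_2-t_0b_1)f_j}\to0$ to show that $a_1+t_0a_2$ is positive definite, and backs off to $t=t_0-\epsilon$ so that both $b_2-tb_1$ and $a_1+ta_2$ are positive definite, repeating once more to land on the stated normal form. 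Your dual--cone picture is a more conceptual packaging of the same information (your $t_0$ is essentially the slope of an extreme ray of $\overline{C_a}$), and it has the virtue of producing both new pairs in one shot via the dual basis, with the identity $A\eta=\hat a_1\eta\hat b_1+\hat a_2\eta\hat b_2$ verified by $V^{T}U=I$. The price you pay is the preliminary reduction to selfadjoint, linearly independent $a_i,b_i$: this rests on the uniqueness of the two--dimensional support spaces of a length--two elementary operator, i.e.\ on the injectivity of the map sending a finite tensor $\sum_i a_i\otimes b_i$ to the operator $\eta\mapsto\sum_i a_i\eta b_i$ on $S_2$. That fact is true and elementary (test on $\eta=\Dn{g}{f}$ and use linear independence of the $b_i$ to separate the coefficients), but you assert it rather than prove it, and it is the one step you must actually write out for the argument to be complete; the paper sidesteps it entirely by working with possibly non--selfadjoint $a_i,b_i$ and extracting selfadjointness only of the particular combinations it needs. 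Two cosmetic points: the inclusion should read $\overline{C_a}\setminus\{0\}\subset\operatorname{int}\bigl(\overline{C_b}^{\,*}\bigr)$ (the origin is never interior to a proper cone), and pointedness of $C_a$ should be justified from convexity of $\overline{W_a}$ together with $0\notin\overline{W_a}$, while solidity follows from linear independence of $a_1,a_2$ via polarization.
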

\begin{proof}
We have $a_1,a_2,b_1,b_2\neq 0$, otherwise one can use Theorem~\ref{th:one-sum-Ap}. In this fashion, for $f,g\in\mathcal H$,
\begin{gather}\label{eq:ip-AnonN-twosum}
0<m_A\leq\ip{\vk{g}\vb {f}}{A\vk{g}\vb {f}}_2=\ip{g}{a_1 g}\ip{f}{b_1f}+\ip{g}{a_2 g}\ip{f}{b_2f}\,.\end{gather}
Let $g_0\in\mathcal H$ such that $\alpha_1=\ip{g_0}{a_1 g_0}\neq 0$ and $\alpha_2=\ip{g_0}{a_2 g_0}$. Thus, one has by \eqref{eq:ip-AnonN-twosum} that $\ip{f}{(\alpha_1b_1+\alpha_2b_2)f}\geq m_A$ and 
\begin{gather*}
A\eta=\frac{a_1}{\alpha_1}\eta(\alpha_1b_1+\alpha_2b_2)+\lrp{a_2-\frac{\alpha_2}{\alpha_1}a_1}\eta b_2\,,
\end{gather*}
By renaming, one can assume $b_2$ positive definite in \eqref{eq:decm-two-sums-A}, as well as $b_1$, by using the above reasoning. 

Now, consider $t,t_0\geq0$ such that $m_{b_2-t_0b_1}=0$ and $m_{b_2-tb_1}>0$, for $t\in[0,t_0)$. So, there exist a non-zero normalized elements $\{f_n\}\subset \mathcal H$ such that $\ip{f_j}{b_2-t_0b_1 f_j}\to 0$. Thereby, 
$A\eta=(a_1+t_0a_2)\eta b_1+a_2\eta(b_2-t_0b_1)$ and
\begin{align*}
m_A\leq \lim_j  \ip{\vk{g}\vb{f_j}}{A \vk{g}\vb {f_j}}_2=\lim_j \ip{g}{(a_1+t_0a_2) g}\ip{f_j}{b_1 f_j}\,,
\end{align*}
whence $a_1+t_0a_2$ is positive definite, since $0<\ip{f_j}{b_1 f_j}\leq\no b$. Moreover, one can choose $t=t_0-\epsilon$, with $\epsilon$ small enough, to guarantee that $b_2- tb_1$ and $a_1+ta_2$ are positive definite and 
\begin{gather*}
A\eta=(a_1+ta_2)\eta b_1+a_2\eta(b_2-tb_1)\,,
\end{gather*}
Again by renaming, we may suppose that $\hat a_2,\hat b_1,\hat b_2$ in \eqref{eq:AnN-twoS} are positive definite. Finally, we can reproduce the above reasoning to guarantee (by renaming) that $\hat a_1\geq0$ and $\hat a_2,\hat b_1,\hat b_2$ are positive definite. As required.\end{proof}

Based on the last two theorems, one might expect that the operators $a_1,\ldots,b_1,\ldots$ in \eqref{eq:selfadjoint-decomposition} could also be considered positive definite, when $A$ is positive definite. Nevertheless, this is not always true due to the following.

\begin{cex}[{\cite[Sec. 4.2]{MR3119877}}]\label{cex:four-sums}
Under the condition $\dim \mathcal H=2$, it follows that  $S_2=\Span\{\varepsilon_{nm}\}_{n,m=1}^2$. Consider $A\in\mathcal B(S_2)$, given by
\begin{align*}
A\eta=(\eta_{11}+(1-t)\eta_{22})\varepsilon_{11}+t\eta_{12}\varepsilon_{12}+t\eta_{21}\varepsilon_{21}+
(\eta_{22}+(1-t)\eta_{11})\varepsilon_{22}\,,
\end{align*}
where $\eta=\sum_{n,m=1}^2\eta_{nm}\varepsilon_{nm}\in S_2$, with $\eta_{nm}\in\C$, and $t\in(0,1/2)$. Thus, the operator $A$ is positive definite since it readily follows that 
\begin{gather*}
\ip{\eta}{A\eta}_2=t\lrp{\abs{\eta_{11}}^2+\abs{\eta_{12}}^2+\abs{\eta_{21}}^2+\abs{\eta_{22}}^2}+(1-t)\abs{\eta_{11}+\eta_{22}}^2\,.
\end{gather*}
However, one can follow the same lines of the proof of \cite[Ex. 4.4]{MR3119877} to show that there is not exist $m\in\N$, non-negative operators $a_j\in\mathcal{B(H)}$ and positive definite operators $b_j\in\mathcal{B(H)}$, for $j=1,\dots,m$, for which $A\eta=\sum_{n=1}^ma_n\eta b_{n}$.
\end{cex}

As shown in the previous example, Theorems~\ref{th:one-sum-Ap} and~\ref{th:two-sum-Ap} cannot be extended to four or more summands (cf. \cite{MR3119877}). However, it is possible to provide conditions on the operators $a_1,\ldots,b_1,\ldots$ in the decomposition \eqref{eq:selfadjoint-decomposition}.

\begin{prop}\label{prop:positive-decompositionA}
For a non-negative operator $A$ in $\mathcal B(S_2)$ there exists a set of selfadjoint operators $\{a_{nm}\}\subset \mathcal{B(H)}$ such that    
\begin{gather}\label{eq:non-negative-decomposition}
A\eta=\sum_{n,m}\hat\varepsilon_{nm}\eta a_{nm}\,,\qquad \eta\in S_2\,,
\end{gather}
where $a_{11},\ldots\geq0$. Besides $a_{11},\ldots$ are positive or positive definite if $A$ is positive or positive definite, respectively, with g.l.b. $m_{a_{nn}}\geq m_A$, for all $n\in\N$.
\end{prop}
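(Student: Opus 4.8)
The plan is to separate the statement into two independent tasks: producing the decomposition~\eqref{eq:non-negative-decomposition} with \emph{all} coefficients $a_{nm}$ selfadjoint, and then showing that the diagonal coefficients $a_{nn}$ inherit the positivity of $A$. The first task costs nothing new: since a non-negative $A$ is in particular selfadjoint, Theorem~\ref{th:selfadjoint-decomposition} applies, and the intermediate expression~\eqref{eq:selfadjoint-decompo-byAmBm} of its proof is already of the shape $A\eta=\sum_{n,m}\hat\varepsilon_{nm}\eta a_{nm}$ with each $a_{nm}$ selfadjoint. I would take this as the decomposition~\eqref{eq:non-negative-decomposition}, so that the whole content of the proposition reduces to the behaviour of the diagonal terms $a_{nn}$.

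To access $a_{nn}$, the idea is to probe $A$ with the rank-one Hilbert--Schmidt operators $\vk{e_n}\vb{f}$, $f\in\mathcal H$. Expanding $A\vk{e_n}\vb{f}$ by~\eqref{eq:non-negative-decomposition} and using the trace, each summand factorizes:
\begin{gather*}
\tr{\vk{f}\vb{e_n}\hat\varepsilon_{pq}\vk{e_n}\vb{f}a_{pq}}=\ip{e_n}{\hat\varepsilon_{pq}e_n}\ip{f}{a_{pq}f}\,,
\end{gather*}
so that $\ip{\vk{e_n}\vb{f}}{A\vk{e_n}\vb{f}}_2=\sum_{p,q}\ip{e_n}{\hat\varepsilon_{pq}e_n}\ip{f}{a_{pq}f}$. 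The crucial point is the identity $\ip{e_n}{\hat\varepsilon_{pq}e_n}=\delta_{pn}\delta_{qn}$, which collapses the double sum to the single term $\ip{f}{a_{nn}f}$. Non-negativity of $A$ then forces $\ip{f}{a_{nn}f}=\ip{\vk{e_n}\vb{f}}{A\vk{e_n}\vb{f}}_2\geq0$ for every $f\in\mathcal H$, i.e.\ $a_{nn}\geq0$.

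The positive and positive-definite refinements follow from the same identity by tracking strictness. If $A>0$, then $\vk{e_n}\vb{f}\neq0$ whenever $f\neq0$, so the probe is nonzero and the inner product is strictly positive, giving $a_{nn}>0$. If $A$ is positive definite, I would use $\no{\vk{e_n}\vb{f}}_2=\no{f}$ to restrict to norm-one probes ($\no{f}=1$); this yields $\ip{f}{a_{nn}f}\geq m_A$ and hence $m_{a_{nn}}=\inf\{\ip{f}{a_{nn}f}:\no{f}=1\}\geq m_A>0$.

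The one step carrying the weight of the argument is the identity $\ip{e_n}{\hat\varepsilon_{pq}e_n}=\delta_{pn}\delta_{qn}$. It is here that the structure of the basis $\{\hat\varepsilon_{nm}\}$ is decisive: because $\hat\varepsilon_{nn}=\varepsilon_{nn}$ (see~\eqref{it1-saB}) and the $\frac{1\pm i}{2}$ weights add up to $1$ exactly on the diagonal, the single diagonal coefficient is cleanly recoverable. Choosing the left slot of the probe to be the canonical vector $e_n$, rather than a general $g\in\mathcal H$ (for which $\ip{g}{\hat\varepsilon_{pq}g}$ would not vanish off the diagonal), is precisely what isolates $a_{nn}$; everything else is routine bookkeeping with the trace and the Hilbert--Schmidt inner product.
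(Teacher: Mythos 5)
Your proof is correct and follows essentially the same route as the paper: both start from the selfadjoint decomposition $A\eta=\sum_{n,m}\hat\varepsilon_{nm}\eta a_{nm}$ obtained in the proof of Theorem~\ref{th:selfadjoint-decomposition}, probe $A$ with rank-one operators $\vk{e_n}\vb{f}$, and use the vanishing of $\ip{e_n}{\hat\varepsilon_{pq}e_n}$ off the diagonal to isolate $\ip{f}{a_{nn}f}$, from which non-negativity, positivity, and the bound $m_{a_{nn}}\geq m_A$ all follow. No gaps.
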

\begin{proof}
If $A$ is non-negative then so is selfadjoint. Thus, bearing in mind Lemma \ref{lem:1std-ecomposition-A} and the decomposition \eqref{eq:selfadjoint-decompo-byAmBm}, one computes that 
\begin{align}\label{eq:A-dec-PDO}
A\eta=\sum_n \varepsilon_{nn}\eta a_{nn}+\sum_{n,m\atop n\neq m} \hat\varepsilon_{nm}\eta \hat a_{nm}\,,
\end{align}
since $\hat\varepsilon_{nn}=\varepsilon_{nn}$ and $\hat a_{nn}=a_{nn}$, being  $\hat a_{nm}=\frac{1-i}{2}a_{nm}+\frac{1+i}{2}a_{mn}\in \mathcal {B(H)}$ selfadjoint. Besides, taking  the canonical basis $\{e_n\}$ of $\mathcal H$, a simple computation shows that $\ip{e_j}{\hat\varepsilon_{nm} e_j}=0$, if $n\neq m$. Thus, for $f\in\mathcal H$ and $n,m,j\in\N$,
\begin{gather}\label{eq:ip2-nneqm}
\ip{\vk{e_j}\vb f}{\hat\varepsilon_{nm}\vk{e_j}\vb f \hat a_{nm}}_2=\ip{e_j}{\hat\varepsilon_{nm} e_j}\ip{f}{\hat a_{nm} f}=0\,,\quad n\neq m\,.
\end{gather}
Thereby, one obtains by \eqref{eq:A-dec-PDO} and \eqref{eq:ip2-nneqm} that
\begin{gather}\label{eq:pd-condition}
\ip{\vk{e_j}\vb f}{A\vk{e_j}\vb f}_2=\sum_n \ip{e_j}{\varepsilon_{nn} e_j}\ip{f}{a_{nn} f}=\ip{f}{a_{jj}f}\,,
\end{gather}
whence it follows that $a_{jj}$ is non-negative. Equation \eqref{eq:pd-condition} also implies that $a_{jj}$ is positive or positive definite if $A$ is positive or positive definite, respectively, with g.l.b. $m_{a_{jj}}\geq m_A$. Hence, \eqref{eq:A-dec-PDO} yields \eqref{eq:non-negative-decomposition}.
\end{proof}

Let us give the following assertion which will be useful in the sequel.

\begin{lem}\label{lem:1st-aproox-PD}
For a positive definite operator $A\in \mathcal B(S_2)$ there exist selfadjoint operators $b_{11},a_{n,m}\in\mathcal {B(H)}$, with $n,m\in\N$, such that $b_{11},a_{11}$ and $a_{22}$ are positive definite and 
\begin{gather}\label{eq:2nd-appox}
A\eta=b_{11}\eta a_{11}+ \sum_{n,m\atop (n,m)\neq (1,1)} \hat\varepsilon_{nm}\eta a_{nm}\,.
\end{gather}
\end{lem}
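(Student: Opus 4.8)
The plan is to start from the decomposition furnished by Proposition~\ref{prop:positive-decompositionA}. Since $A$ is positive definite, that result yields selfadjoint operators $\{a_{nm}\}\subset\mathcal{B(H)}$ with
\begin{gather*}
A\eta=\sum_{n,m}\hat\varepsilon_{nm}\eta a_{nm}=\varepsilon_{11}\eta a_{11}+\sum_{(n,m)\neq(1,1)}\hat\varepsilon_{nm}\eta a_{nm}\,,
\end{gather*}
where every diagonal coefficient $a_{nn}$ is positive definite with g.l.b. $m_{a_{nn}}\geq m_A>0$, and where I used $\hat\varepsilon_{11}=\varepsilon_{11}$ from \eqref{it1-saB}. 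The first-slot operator $a_{11}$ is therefore already positive definite, so the only obstruction to the desired form \eqref{eq:2nd-appox} is that the coefficient $\hat\varepsilon_{11}=\varepsilon_{11}=\vk{e_1}\vb{e_1}$ is a rank-one projection, far from being positive definite. The idea is to \emph{inflate} it to a positive definite operator by adding a small multiple of the identity on its kernel, and then to absorb the resulting correction into the diagonal terms.

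Concretely, I would fix $\lambda\in(0,1)$ and set $b_{11}\colonequals\varepsilon_{11}+\lambda(\unit-\varepsilon_{11})=\lambda\unit+(1-\lambda)\varepsilon_{11}$. This is selfadjoint with eigenvalue $1$ on $e_1$ and $\lambda$ on each $e_k$, $k\geq2$, hence positive definite with g.l.b. $\lambda$. Writing $\unit-\varepsilon_{11}=\sum_{k\geq2}\varepsilon_{kk}$, one has $b_{11}\eta a_{11}=\varepsilon_{11}\eta a_{11}+\lambda\sum_{k\geq2}\varepsilon_{kk}\eta a_{11}$, whence
\begin{gather*}
\varepsilon_{11}\eta a_{11}=b_{11}\eta a_{11}-\lambda\sum_{k\geq2}\varepsilon_{kk}\eta a_{11}\,.
\end{gather*}
Substituting this into the decomposition above and merging the correction with the existing diagonal terms $\sum_{k\geq2}\varepsilon_{kk}\eta a_{kk}$ (recall $\varepsilon_{kk}=\hat\varepsilon_{kk}$) gives
\begin{gather*}
A\eta=b_{11}\eta a_{11}+\sum_{k\geq2}\hat\varepsilon_{kk}\eta(a_{kk}-\lambda a_{11})+\sum_{n\neq m}\hat\varepsilon_{nm}\eta a_{nm}\,,
\end{gather*}
which is exactly the shape of \eqref{eq:2nd-appox} once one renames the coefficient of each $\hat\varepsilon_{kk}$, $k\geq2$, to the (still selfadjoint) operator $a_{kk}-\lambda a_{11}$ and leaves the off-diagonal coefficients untouched.

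It then remains to fix $\lambda$ so that the renamed $a_{22}=a_{22}-\lambda a_{11}$ stays positive definite; this is the step requiring care. Since $a_{22}$ has g.l.b. $m_{a_{22}}\geq m_A>0$ and $a_{11}\in\mathcal{B(H)}$ is bounded and non-zero (being positive definite, so $\no{a_{11}}>0$), for a unit vector $f$ one estimates $\ip{f}{(a_{22}-\lambda a_{11})f}\geq m_{a_{22}}-\lambda\no{a_{11}}$. Choosing $\lambda\in\bigl(0,\min\{1,m_{a_{22}}/\no{a_{11}}\}\bigr)$ then forces $m_{a_{22}-\lambda a_{11}}>0$, so the renamed $a_{22}$ is positive definite, while $b_{11}$ (g.l.b. $\lambda$) and the unchanged first-slot $a_{11}$ are positive definite by construction. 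This delivers a representation of the form \eqref{eq:2nd-appox} with $b_{11},a_{11},a_{22}$ positive definite, as required. The only genuine obstacle is the bookkeeping that the single scalar $\lambda$ must simultaneously render $b_{11}$ positive definite and preserve the positive definiteness of $a_{22}$; both are controlled by the strictly positive g.l.b.\ supplied by Proposition~\ref{prop:positive-decompositionA} together with the boundedness of $a_{11}$.
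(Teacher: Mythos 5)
Your argument is correct, but it takes a genuinely different (and simpler) route than the paper's. The paper mimics the two--summand argument of Theorem~\ref{th:two-sum-Ap}: it isolates the $(1,1)$ and $(2,2)$ diagonal terms of the decomposition from Proposition~\ref{prop:positive-decompositionA}, locates the critical value $t_0$ at which $a_{22}-t_0a_{11}$ ceases to be positive definite, evaluates the quadratic form of $A$ on rank-one operators $\vk{g}\vb{f_j}$ along a minimizing sequence for $a_{22}-t_0a_{11}$, and uses $m_A>0$ to conclude that the limiting first-slot combination $\gamma_{11}(\varepsilon_{11}+t_0\varepsilon_{22})+\sum_{n,m}\gamma_{nm}\hat\varepsilon_{nm}$ is positive definite; backing off to $t=t_0-\epsilon$ and renaming then yields \eqref{eq:2nd-appox}. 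You instead inflate $\varepsilon_{11}$ to $b_{11}=\lambda\unit+(1-\lambda)\varepsilon_{11}$ and absorb the correction $-\lambda(\unit-\varepsilon_{11})\eta a_{11}$ into the remaining diagonal coefficients, tuning $\lambda<m_{a_{22}}/\no{a_{11}}$ so that the renamed coefficient $a_{22}-\lambda a_{11}$ stays positive definite. This is purely algebraic and avoids the limiting argument entirely; since the lemma only demands that $b_{11}$, $a_{11}$, $a_{22}$ be positive definite and the remaining coefficients selfadjoint, your construction fully proves the statement, and it even gives positive definiteness of every renamed diagonal coefficient $a_{kk}-\lambda a_{11}$ for free if one takes $\lambda<m_A/\no{a_{11}}$, because $m_{a_{kk}}\geq m_A$ for all $k$. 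The one caveat, which applies equally to the paper's own manipulations (e.g.\ the split in \eqref{eq:A-dec-PDO}), is that separating the double sum into diagonal and off-diagonal parts and regrouping presupposes that these subsums converge in $S_2$; on this point you are at exactly the paper's level of rigor. What the paper's heavier machinery buys is a $b_{11}$ that also encodes the off-diagonal structure of $A$ through the coefficients $\gamma_{nm}$, but neither the statement of the lemma nor its subsequent use in Theorem~\ref{th:charat-PDO-decomp} requires this extra information.
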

\begin{proof}
One has from Proposition~\ref{prop:positive-decompositionA} that $A\eta=\sum_{n,m} \hat\varepsilon_{nm}\eta a_{nm}$, where $b_n=a_{nn}$ is positive definite with g.l.b. $m_{b_n}\geq m_A$, and $a_{nm}$ is selfadjoint,  for all $n,m\in\N$. Take $t,t_0\geq0$ for which $m_{b_2-t_0b_1}=0$ and $m_{b_2-tb_1}>0$ for $t\in[0,t_0)$. Thereby, there exist non-zero normalized elements $f_1,\ldots\in \mathcal H$ such that $\ip{f_j}{b_2-t_0b_1 f_j}\to 0$. Thus, one readily computes that 
\begin{gather}\label{eq:1st-appox}
A\eta=(\varepsilon_{11}+t_0\varepsilon_{22})\eta b_1+\varepsilon_{22}\eta(b_2-t_0b_1)+\sum_{n,m\atop (n,m)\neq (1,1),(2,2)} \hat\varepsilon_{nm}\eta a_{nm}\,.
\end{gather}
So, for $g\in\mathcal H$, by Cauchy–Schwarz inequality $ \ip{\vk{g}\vb{f_j}}{A \vk{g}\vb {f_j}}_2\leq \no A\no g^2$ and by dominated convergence theorem, one obtains from \eqref{eq:1st-appox} that 
\begin{align}\nonumber
0&<m_A\leq \lim_j  \ip{\vk{g}\vb{f_j}}{A \vk{g}\vb {f_j}}_2\\\nonumber
&=\lim_j \ip{g}{(\varepsilon_{11}+t_0\varepsilon_{22}) g}\ip{f_j}{b_1 f_j}+ \sum_{n,m\atop (n,m)\neq (1,1),(2,2)}\ip{g}{ \hat\varepsilon_{nm} g}\ip{f_j}{a_{nm} f_j}\\\label{eq:positi-defi-aux-mA}
&=\ip{g}{\lrp{\gamma_{11}(\varepsilon_{11}+t_0\varepsilon_{22})+\sum_{n,m\atop (n,m)\neq (1,1),(2,2)}\gamma_{nm}{\hat\varepsilon_{nm}}}g}\,,
\end{align}
being $\gamma_{n,m}=\lim_{j}\ip{f_j}{a_{nm} f_j}$, for $n,m\in\N$, with $(n,m)\neq(2,2)$, whence it follows that $\abs{\gamma_{n,m}}\leq\no{a_{nm}}$, and 
 $\gamma_{nn}\geq m_{b_n}>0$, for $n=1,3,\ldots$. Thus, one has by \eqref{eq:positi-defi-aux-mA} that $\gamma_{11}(\varepsilon_{11}+t_0\varepsilon_{22})+\sum_{n,m\atop (n,m)\neq (1,1),(2,2)}\gamma_{nm}{\hat\varepsilon_{nm}}$ is positive definite operator in $\mathcal{B(H)}$. Besides, we can choose $t=t_0-\epsilon$, with $\epsilon$ small enough, to guarantee that $b_2- tb_1$ and $\tilde\varepsilon_{11}=\gamma_{11}(\varepsilon_{11}+t\varepsilon_{22})+\sum_{n,m\atop (n,m)\neq (1,1),(2,2)}\gamma_{nm}{\hat\varepsilon_{nm}}$ are positive definite. Now we can use this and \eqref{eq:1st-appox} to write $A$ as
 \begin{gather*}
 A\eta=\tilde\varepsilon_{11}\eta \frac{b_1}{\gamma_{11}}+ \varepsilon_{22}\eta(b_2-tb_1)+ \sum_{n,m\atop (n,m)\neq (1,1),(2,2)} \hat\varepsilon_{nm}\eta \lrp{a_{nm}-\frac{\gamma_{nm}}{\gamma_{11}}b_1}\,,
 \end{gather*}
 which by renaming one arrives at \eqref{eq:2nd-appox}. 
\end{proof}

According to Remark~\ref{rm:selfadjoint-Asuff}, if $A\in\mathcal B(S_2)$ satisfies $A\eta=\sum_na_n\eta b_n$, where $\{a_n\},\{b_n\}\subset \mathcal{B(H)}$ meet the conditions of Theorem~\ref{th:A-nonnegative}.\eqref{it4:sA}, then $A$ is positive definite. Under certain conditions, the following result establishes the converse statement.

\begin{thm}\label{th:charat-PDO-decomp}
If $A\in  \mathcal B(S_2)$ is positive definite then there exist non-negative operators $a_3,\ldots\in\mathcal{B(H)}$ and positive definite operators $a_1,a_2,b_1,\ldots\in\mathcal{B(H)}$ such that 
\begin{gather}\label{eq:positive-definite-decomposition}
A\eta=-a_1\eta b_1+\sum_{n\geq2}a_n\eta b_{n}\,,\qquad \eta\in S_2\,.
\end{gather}
Moreover, if there exist $\zeta_n>0$, for $n=2,\dots$, such that  
\begin{gather}\label{eq:positive-definite-Cond}
b_n-\zeta_n b_1\,\mbox{ are positive definite}\quad\mbox{and}\quad 
-a_1+\sum_{n\geq2}\zeta_na_n\geq0\,,
\end{gather}
then there exist non-negative operators $\hat a_1,\ldots\in\mathcal{B(H)}$, with $\cap_n\ker\hat  a_n=\{0\}$, and positive definite operators $ \hat b_1,\ldots\in\mathcal{B(H)}$ such that 
\begin{gather}\label{eq:positive-definite-decomposition-t}
A\eta=\sum_{n}\hat  a_n\eta\hat  b_{n}\,,\qquad \eta\in S_2\,.
\end{gather}
\end{thm}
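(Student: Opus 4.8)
The plan is to establish the two assertions separately: I would obtain \eqref{eq:positive-definite-decomposition} by a shift-and-collect construction modeled on the finite-dimensional argument of \cite{MR3119877}, and then derive \eqref{eq:positive-definite-decomposition-t} from \eqref{eq:positive-definite-decomposition} by a telescoping computation under the hypothesis \eqref{eq:positive-definite-Cond}. For the first assertion I would start from Lemma~\ref{lem:1st-aproox-PD}, which already supplies $A\eta=b_{11}\eta a_{11}+\sum_{(n,m)\neq(1,1)}\hat\varepsilon_{nm}\eta a_{nm}$ with $b_{11},a_{11},a_{22}$ positive definite and the remaining $a_{nm}$ merely selfadjoint. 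The term $b_{11}\eta a_{11}$ is already of the desired ``positive definite times positive definite'' type and will play the role of $a_2\eta b_2$; the task is thus to recast the indefinite tail so that every right flank becomes positive definite and every left flank becomes non-negative, at the cost of a single negative summand.

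Concretely, since $a_{11}>0$ has positive g.l.b., each bounded selfadjoint $a_{nm}$ satisfies $a_{nm}+\lambda_{nm}a_{11}>0$ for $\lambda_{nm}$ large; writing $a_{nm}=(a_{nm}+\lambda_{nm}a_{11})-\lambda_{nm}a_{11}$ pushes every right flank into the positive-definite cone and peels off a correction carrying the common right flank $a_{11}$, namely $-R\eta a_{11}$ with $R=\sum_{(n,m)\neq(1,1)}\lambda_{nm}\hat\varepsilon_{nm}$. Using $\no{\hat\varepsilon_{nm}}=2^{-1/2}$ for $n\neq m$ (the diagonal ones being already non-negative by \eqref{it1-saB}) one renders each off-diagonal left flank non-negative via $\hat\varepsilon_{nm}+\mu_{nm}\unit>0$, peeling off a correction $-\unit\eta S$ with the common left flank $\unit$, where $S=\sum_{n\neq m}\mu_{nm}(a_{nm}+\lambda_{nm}a_{11})$. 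After combining $b_{11}\eta a_{11}-R\eta a_{11}=(b_{11}-R)\eta a_{11}$, a final identity shift $b_{11}-R=(b_{11}-R+\kappa\unit)-\kappa\unit$ and $S=(S+\nu\unit)-\nu\unit$ (with $\kappa,\nu$ large) turns the two surviving problematic pieces into $-\kappa\unit\eta a_{11}$ and $-\unit\eta(S+\nu\unit)$, both flanked on the left by $\unit$. These merge into the single summand $-\unit\eta T$ with $T=\kappa a_{11}+S+\nu\unit>0$. Reading off $a_1=\unit$, $b_1=T$, $a_2=b_{11}-R+\kappa\unit$, $b_2=a_{11}$, together with the positive summands $\nu\unit\eta\unit$ and $(\hat\varepsilon_{nm}+\mu_{nm}\unit)\eta(a_{nm}+\lambda_{nm}a_{11})$ indexed by $n\geq3$, gives \eqref{eq:positive-definite-decomposition}, with all right flanks positive definite, all left flanks non-negative, and $a_1,a_2$ positive definite.

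I expect the genuine difficulty, absent from the finite-dimensional model of \cite{MR3119877}, to be convergence: the coefficients $\no{a_{nm}}$ need not decay, so the rearranged expansion and the collected operators $R$ and $S$ must be shown to define bounded operators and to converge strongly on $S_2$. I would control this by exploiting that the original expansion converges strongly (Proposition~\ref{prop:positive-decompositionA}), by choosing the shift parameters $\lambda_{nm},\mu_{nm}$ to decay, and, if necessary, by passing through finite truncations and using the lower bound $m_A>0$. This is the step where the passage to infinite dimensions is most delicate.

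The second assertion is a short computation. Assuming \eqref{eq:positive-definite-Cond}, I substitute $b_n=(b_n-\zeta_nb_1)+\zeta_nb_1$ into \eqref{eq:positive-definite-decomposition} and regroup; this is legitimate because the hypothesis $-a_1+\sum_{n\geq2}\zeta_na_n\geq0$ already presupposes convergence of $\sum_{n\geq2}\zeta_na_n$ in $\mathcal{B(H)}$, so $\sum_{n\geq2}a_n\eta(b_n-\zeta_nb_1)$ converges as the difference of two convergent series. One obtains $A\eta=(-a_1+\sum_{n\geq2}\zeta_na_n)\eta b_1+\sum_{n\geq2}a_n\eta(b_n-\zeta_nb_1)$. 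Setting $\hat a_1=-a_1+\sum_{n\geq2}\zeta_na_n$, $\hat b_1=b_1$, and $\hat a_n=a_n$, $\hat b_n=b_n-\zeta_nb_1$ for $n\geq2$, the first part of \eqref{eq:positive-definite-Cond} gives $\hat b_n>0$ for all $n$, the second gives $\hat a_1\geq0$, and the remaining $\hat a_n$ are non-negative with $\hat a_2=a_2>0$. Hence every left flank is non-negative, every right flank is positive definite, and $\cap_n\ker\hat a_n\subseteq\ker\hat a_2=\{0\}$, which is precisely \eqref{eq:positive-definite-decomposition-t}.
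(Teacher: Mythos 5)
Your proposal follows essentially the same route as the paper: both start from Lemma~\ref{lem:1st-aproox-PD}, push every right flank into the positive-definite cone by adding large multiples of a positive-definite anchor ($a_{11}$ for you, $a_{22}$ in the paper), push the off-diagonal left flanks into the non-negative cone by a second family of shifts, collect all corrections into a single negative summand to reach \eqref{eq:positive-definite-decomposition}, and then derive \eqref{eq:positive-definite-decomposition-t} by exactly the telescoping regrouping $b_n=(b_n-\zeta_nb_1)+\zeta_nb_1$ with $\ker\hat a_2=\{0\}$ forcing $\cap_n\ker\hat a_n=\{0\}$. The one substantive point is the convergence of the collected corrections $R$ and $S$, which you rightly single out as the delicate infinite-dimensional step; be aware, though, that your suggested remedy of letting the shift parameters decay cannot work (non-negativity of $\hat\varepsilon_{nm}+\mu_{nm}\unit$ forces $\mu_{nm}\geq 2^{-1/2}$ since $\no{\hat\varepsilon_{nm}}=2^{-1/2}$), and that the paper's own proof leaves the analogous sums $\sum\beta_{nm}\hat\varepsilon_{nm}$ and $\sum_{n\neq m}\lambda_{nm}a_{nm}$ equally unjustified, so on this point your argument is no weaker than the published one.
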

\begin{proof}
If $A$ is positive definite, then it satisfies Lemm~\ref{lem:1st-aproox-PD}. So, taking into account the decomposition \eqref{eq:2nd-appox}, since $b_{11},a_{11}, a_{22}$ are positive definite 
one can choose $\beta_{nm}>0$ sufficiently large such that $\beta_{nm}a_{22}+a_{nm}$ is positive definite, with $n,m\in\N$, $(n,m)\neq (1,1),(2,2)$. Thus, bearing in mind the selfadjoint operator $b_{22}=\varepsilon_{22}-\sum_{n,m\atop (n,m)\neq (1,1),(2,2)} \beta_{nm}\hat\varepsilon_{nm}$, one computes that
\small
\begin{align*}
A\eta&= b_{11}\eta a_{11}+ b_{22}\eta a_{22}+
\sum_{n,m\atop (n,m)\neq (1,1),(2,2)} \hat\varepsilon_{nm}\eta \lrp{\beta_{nm}a_{22}+a_{nm}}\\
&=b_{11}\eta (a_{11}+\alpha a_{22})- (\alpha b_{11}-b_{22})\eta a_{22}+
\sum_{n,m\atop (n,m)\neq (1,1),(2,2)} \hat\varepsilon_{nm}\eta \lrp{\beta_{nm}a_{22}+a_{nm}}\,,
\end{align*}\normalsize
where $\alpha>0$ is sufficiently large such that $\alpha b_{11}-b_{22}$ is positive definite. By renaming, we may suppose that  
\begin{gather}\label{eq:3rd-deco-aux-A}
A\eta=-c_{11}\eta a_{11}+c_{22}\eta a_{22}+ \sum_{n,m\atop (n,m)\neq (1,1),(2,2)}\hat\varepsilon_{nm}\eta a_{nm}\,,
\end{gather}
where $c_{11},c_{22}$ and $a_{nm}$, with $n,m\in\N$, are positive definite and the rest of the operators selfadjoint. Now taking $\lambda_{nm}>0$ large enough for which $\hat\varepsilon_{nm}+\lambda_{nm}c_{11}$ are non-negative, for $n,m\in\N$, $n\neq m$. Hence, one readily computes from by \eqref{it1-saB} and \eqref{eq:3rd-deco-aux-A} that
\begin{align*}
A\eta=\,&-c_{11}\eta \lrp{a_{11}+\sum_{n,m\atop n\neq m}\lambda_{nm}a_{nm}}+c_{22}\eta a_{22}
\\&+\sum_{n,m\atop n\neq m}\lrp{\hat\varepsilon_{nm}+\lambda_{nm}c_{11}}\eta a_{nm}+\sum_{n\geq3}\varepsilon_{nn}\eta a_{nn}\,,
\end{align*}
which again by renaming one arrives at \eqref{eq:positive-definite-decomposition}. Besides, if \eqref{eq:positive-definite-Cond} holds then one simply computes by \eqref{eq:positive-definite-decomposition} that 
\begin{gather}\label{eq:positive-definite-nonnegativeO}
A\eta=\lrp{-a_1+\sum_{n\geq2}\zeta_na_n}\eta b_1+\sum_{n\geq2}a_n\eta \lrp{b_{n}-\zeta_n b_1}\,,
\end{gather}
where $\ker{(-a_1+\sum_{n\geq2}\zeta_na_n)}\cap\cap_{n\geq2}a_n=\{0\}$, since $a_2$ is positive definite, viz. invertible. Therefore, \eqref{eq:positive-definite-nonnegativeO} yields \eqref{eq:positive-definite-decomposition-t}.
\end{proof}
\begin{rem}
It is worth pointing out that Theorem~\ref{th:charat-PDO-decomp} is also valid if we chance the hypothesis of $\{a_n\}$ and $\{b_n\}$.
\end{rem}

\subsection{A rigorous characterization of continuous sesquilinear forms and inner products on $S_2$}\label{ss-ARCIP}

We are now prepared to rigorously characterize the continuous sesquilinear forms on $S_2$, particularly the inner products on $S_2$. 

\begin{thm}
Every continuous sesquilinear  form $\varphi\colon S_2\times S_2\to\C$ admits the representation 
\begin{gather*}
\varphi(\eta,\tau)=\sum_n\tr{\eta^*a_n\tau b_n}\,, \qquad \{a_n\},\{b_n\}\subset \mathcal{B(H)}\,,
\end{gather*}
where the linear operator $\tau\mapsto \sum_na_n\tau b_n$ belongs to $\mathcal B(S_2)$. Besides, $\varphi$ is hermitian if and only if $a_1\ldots,b_1\ldots$ can be chosen as selfadjoint operators.
\end{thm}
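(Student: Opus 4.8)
The plan is to assemble the statement directly from the representation and decomposition results already established, so the argument amounts to chaining earlier lemmas and theorems together. First I would invoke Lemma~\ref{th:cont-sesq-forms}, which produces a bounded operator $A\in\mathcal B(S_2)$ with $\varphi(\eta,\tau)=\tr{\eta^*A\tau}$ and $\no A=\no\varphi$. Applying Theorem~\ref{th2:general-decomosition-A} to this $A$ yields sequences $\{a_n\},\{b_n\}\subset\mathcal{B(H)}$ with $A\tau=\sum_n a_n\tau b_n$ for every $\tau\in S_2$, the series converging in the Hilbert--Schmidt norm. Substituting this decomposition into the trace representation is the crux of the computation, and the operator $\tau\mapsto\sum_n a_n\tau b_n$ lies in $\mathcal B(S_2)$ precisely because it coincides with $A$.

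The only point requiring care is the interchange of the trace with the infinite sum. Since $\tr{\eta^*\cdot}=\ip{\eta}{\cdot}_2$ is exactly the inner product on $S_2$ by \eqref{eq:inner-product-S2}, and the partial sums $\sum_{n=1}^N a_n\tau b_n$ converge to $A\tau$ in $S_2$, continuity of $\ip{\cdot}{\cdot}_2$ in its second argument gives
\begin{gather*}
\varphi(\eta,\tau)=\ip{\eta}{A\tau}_2=\ip{\eta}{\sum_n a_n\tau b_n}_2=\sum_n\ip{\eta}{a_n\tau b_n}_2=\sum_n\tr{\eta^*a_n\tau b_n}\,,
\end{gather*}
which is the claimed representation.

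For the hermitian equivalence I would argue both directions through the selfadjointness characterization. If $\varphi$ is hermitian, then $A$ is selfadjoint by Lemma~\ref{th:cont-sesq-forms}.\eqref{it1:inn-p}, and Theorem~\ref{th:selfadjoint-decomposition} now furnishes \emph{selfadjoint} operators $a_1,\ldots,b_1,\ldots\in\mathcal{B(H)}$ realizing $A\tau=\sum_n a_n\tau b_n$; repeating the termwise evaluation above exhibits $\varphi$ in the desired form with selfadjoint factors. Conversely, if the $a_n,b_n$ can be taken selfadjoint, then Theorem~\ref{th:A-nonnegative}.\eqref{it1:sA} together with Remark~\ref{rm:selfadjoint-Asuff} (recalling $\dom A=S_2$) shows that the associated operator $A$ is selfadjoint, whence $\varphi$ is hermitian again by Lemma~\ref{th:cont-sesq-forms}.\eqref{it1:inn-p}.

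Since every ingredient is already in hand, I do not anticipate a substantial obstacle; the only mildly technical step is the justification of the termwise trace evaluation via continuity of $\ip{\cdot}{\cdot}_2$, and everything else is a direct citation of the preceding results.
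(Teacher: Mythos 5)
Your proposal is correct and follows essentially the same route as the paper's own (very terse) proof, which likewise combines Lemma~\ref{th:cont-sesq-forms} with Theorem~\ref{th2:general-decomosition-A} for the representation, and Lemma~\ref{th:cont-sesq-forms}.\eqref{it1:inn-p} with Theorems~\ref{th:A-nonnegative}.\eqref{it1:sA} and~\ref{th:selfadjoint-decomposition} for the hermitian equivalence. Your explicit justification of the termwise trace evaluation via continuity of $\ip{\cdot}{\cdot}_2$ is a detail the paper leaves implicit, and it is a welcome addition.
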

\begin{proof}
The first part of the assertion follows from Lemma~\ref{th:cont-sesq-forms} and Theorem~\ref{th2:general-decomosition-A}, while the second part is a consequence of Lemma~\ref{th:cont-sesq-forms}.\eqref{it1:inn-p} and theorems~\ref{th:A-nonnegative}.\eqref{it1:sA}, ~\ref{th:selfadjoint-decomposition}.
\end{proof}
In the following, we present necessary and sufficient conditions for which a continuous sesquilinear form defines an inner product on $S_2$.

\begin{thm}\label{th:producing-IP}
In $\mathcal {B(H)}$, let $a_1,\ldots\geq0$,with $\cap_n \ker a_n=\{0\}$, and $b_1,\ldots>0$, such that $\eta\mapsto \sum_na_n\eta b_n$ is defined in the whole space $S_2$. Then, 
\begin{gather*}
\varphi(\eta,\tau)=\sum_n\tr{\eta^*a_n\tau b_n}\,,\quad \eta,\tau\in S_2\,,
\end{gather*}
defines an inner product on $S_2$. Besides, $\varphi$ is a definite inner product if $b_1,\ldots$ are positive definite, with $\inf_n\{m_{b_n}\}>0$.
\end{thm}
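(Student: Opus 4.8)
The plan is to identify $\varphi$ with its representing operator and then read off positivity directly from Theorem~\ref{th:A-nonnegative}. First I would set $A\eta=\sum_na_n\eta b_n$ and observe that, by hypothesis, $A$ is defined on the whole space $S_2$; in particular it is defined on the dense subspace $\Span\{\varepsilon_{nm}\}$, so the summability condition \eqref{eq:condition-anbn} is automatically satisfied and Theorem~\ref{th:A-nonnegative} applies to $A$. Moreover, since $A\tau=\sum_na_n\tau b_n$ converges in $S_2$ and $\ip{\eta}{\cdot}_2=\tr{\eta^*\cdot}$ is continuous, one obtains $\tr{\eta^*A\tau}=\ip{\eta}{A\tau}_2=\sum_n\ip{\eta}{a_n\tau b_n}_2=\sum_n\tr{\eta^*a_n\tau b_n}=\varphi(\eta,\tau)$, so $\varphi$ is exactly the form represented by $A$ in the sense of Lemma~\ref{th:cont-sesq-forms}.

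Next I would invoke the interchanged form of Theorem~\ref{th:A-nonnegative}.\eqref{it3:sA}: with $b_1,\ldots>0$, $a_1,\ldots\geq0$ and $\cap_n\ker a_n=\{0\}$, the operator $A$ is positive on $\Span\{\varepsilon_{nm}\}$. Because $\dom A=S_2$, Remark~\ref{rm:selfadjoint-Asuff} upgrades $A$ to a bounded selfadjoint operator in $\mathcal B(S_2)$, and positivity extends from the dense domain to all of $S_2$ by continuity of $\eta\mapsto\ip{\eta}{A\eta}_2$. Thus $0<A\in\mathcal B(S_2)$, and Lemma~\ref{th:cont-sesq-forms}.\eqref{it2:inn-p} gives that $\varphi$ is an inner product on $S_2$. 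For the definite case I would argue identically but through the interchanged form of Theorem~\ref{th:A-nonnegative}.\eqref{it4:sA}: the additional assumptions that $b_1,\ldots$ are positive definite with $m_b=\inf_n\{m_{b_n}\}>0$, together with $a_1,\ldots\geq0$ and $\cap_n\ker a_n=\{0\}$, yield that $A$ is positive definite; again Remark~\ref{rm:selfadjoint-Asuff} places $A$ in $\mathcal B(S_2)$, so Lemma~\ref{th:cont-sesq-forms}.\eqref{it2:inn-p} makes $\varphi$ a definite inner product.

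The argument carries no genuine difficulty: it is an assembly of the already-established results, and the only points requiring care are correctly reading off the ``interchanged'' versions of parts \eqref{it3:sA}--\eqref{it4:sA} of Theorem~\ref{th:A-nonnegative} (matching the present hypothesis $b_n>0$ against the role formerly played by the $a_n$) and justifying the passage from the dense domain $\Span\{\varepsilon_{nm}\}$ to all of $S_2$. I expect the single most delicate step to be this last extension, since Theorem~\ref{th:A-nonnegative} asserts positivity only on $\Span\{\varepsilon_{nm}\}$, whereas the definition of an inner product requires the form to be positive on the entire space; this is precisely what Remark~\ref{rm:selfadjoint-Asuff} together with the hypothesis $\dom A=S_2$ is designed to supply.
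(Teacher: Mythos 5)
Your overall route is exactly the paper's: the paper's proof is a one-line citation of Lemma~\ref{th:cont-sesq-forms}.\eqref{it2:inn-p}, Theorem~\ref{th:A-nonnegative}.\eqref{it3:sA}--\eqref{it4:sA} (in interchanged form) and Remark~\ref{rm:selfadjoint-Asuff}, and you assemble precisely those ingredients. However, the one step you yourself single out as delicate is justified incorrectly. You claim that positivity ``extends from the dense domain $\Span\{\varepsilon_{nm}\}$ to all of $S_2$ by continuity of $\eta\mapsto\ip{\eta}{A\eta}_2$.'' Continuity only preserves the non-strict inequality: from $\ip{\eta}{A\eta}_2>0$ on a dense set you can conclude $\ip{\eta}{A\eta}_2\geq 0$ everywhere, but not strict positivity. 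Indeed, for a bounded $A\geq0$ one has $\ip{\eta}{A\eta}_2=0$ exactly on $\ker A$, which is a closed subspace and can perfectly well be nonzero while meeting a dense subspace only in $\{0\}$. So as written the argument establishes only that $\varphi$ is a non-negative form, not an inner product.

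The gap is easy to close, and the closing is what the paper implicitly intends: the proof of Theorem~\ref{th:A-nonnegative}.\eqref{it3:sA} is a pointwise argument valid for every nonzero $\eta\in\dom A$ and nowhere uses that $\dom A$ equals $\Span\{\varepsilon_{nm}\}$; since the hypothesis of the present theorem is that $\eta\mapsto\sum_na_n\eta b_n$ is defined on all of $S_2$, one simply runs that argument with $\dom A=S_2$ and obtains $\ip{\eta}{A\eta}_2>0$ for every nonzero $\eta\in S_2$ directly, with Remark~\ref{rm:selfadjoint-Asuff} supplying boundedness and selfadjointness. (For the definite case your continuity argument is actually fine, because the uniform bound $\ip{\eta}{A\eta}_2\geq m_A\no\eta_2^2$ does pass to the closure; it is only bare strict positivity that does not.) Everything else in your write-up --- the verification of \eqref{eq:condition-anbn}, the identification $\varphi(\eta,\tau)=\tr{\eta^*A\tau}$, and the matching of the hypotheses against the interchanged versions of \eqref{it3:sA} and \eqref{it4:sA} --- is correct and agrees with the paper.
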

\begin{proof}
The proof is a direct consequence of Lemma~\eqref{th:cont-sesq-forms}.\eqref{it2:inn-p}, Theorem~\ref{th:A-nonnegative}.\eqref{it3:sA}-\eqref{it4:sA} and Remark~\ref{rm:selfadjoint-Asuff}.
\end{proof}

Under certain conditions, the following describes inner products.

\begin{thm}\label{th:one-two-sumas-IP}
Let $\varphi\colon S_2\times S_2\to \C$ be an inner product on $S_2$ such that $\varphi(\eta,\tau)=\sum_{n=1}^m\tr{\eta^*  a_n\tau  b_{n}}$, with $\{a_n\}_{n=1}^m,\{b_n\}_{n=1}^m\in\mathcal{B(H)}$, for $m=1,2$.
\begin{enumerate}[(i)]
\item If $m=1$ then there exist positive operators $\hat a,\hat b\in\mathcal{B(H)}$ such that
\begin{gather*}
\varphi(\eta,\tau)=\tr{\eta^*\hat a\tau\hat b}\,.
\end{gather*}
\item If $m=2$ and $\varphi$ is a definite inner product, then there exist $\hat a_1,\hat a_2\in\mathcal{B(H)}$ non-negative, whit $\ker a_1\cap\ker a_2=\{0\}$, and $b_1,b_2\in \mathcal{B(H)}$ positive definite, such that 
\begin{gather*}
\varphi(\eta,\tau)=\tr{\eta^*\hat a_1\tau\hat b_1}+\tr{\eta^*\hat a_2\tau\hat b_2}\,.
\end{gather*}
\end{enumerate}
\end{thm}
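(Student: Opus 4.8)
The plan is to transfer the statement, via Lemma~\ref{th:cont-sesq-forms}, into an assertion about the positive (respectively positive definite) operator $A\in\mathcal B(S_2)$ representing $\varphi$, and then to invoke the single-summand and two-summand decomposition results already in hand. Concretely, write $\varphi(\eta,\tau)=\tr{\eta^*A\tau}$ with $A\eta=\sum_{n=1}^m a_n\eta b_n$. By Lemma~\ref{th:cont-sesq-forms}.\eqref{it2:inn-p}, the hypothesis that $\varphi$ is an inner product forces $A\geq0$ to be positive, and the stronger hypothesis that $\varphi$ is a definite inner product forces $A$ to be positive definite. Each item then collapses to applying Theorem~\ref{th:one-sum-Ap} or Theorem~\ref{th:two-sum-Ap} and reading the resulting operator identity back as an identity for $\varphi$.

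For item (i), I would first record that $A\neq0$: a positive operator on the nontrivial Hilbert space $S_2$ cannot vanish, so $A\eta=a_1\eta b_1$ satisfies the standing hypothesis of Theorem~\ref{th:one-sum-Ap}. Since $\varphi$ is an inner product we have $A\geq0$, and that theorem produces $\hat a,\hat b\in\mathcal{B(H)}$ with $A\eta=\hat a\eta\hat b$; its concluding clause then upgrades $\hat a,\hat b$ to \emph{positive} operators precisely because $A$ itself is positive. Substituting back yields $\varphi(\eta,\tau)=\tr{\eta^*\hat a\tau\hat b}$ with $\hat a,\hat b>0$, as claimed.

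For item (ii), the assumption that $\varphi$ is a definite inner product gives, again through Lemma~\ref{th:cont-sesq-forms}.\eqref{it2:inn-p}, that $A\eta=a_1\eta b_1+a_2\eta b_2$ is positive definite. This is exactly the hypothesis of Theorem~\ref{th:two-sum-Ap}, which furnishes non-negative $\hat a_1,\hat a_2$ with $\ker\hat a_1\cap\ker\hat a_2=\{0\}$ together with positive definite $\hat b_1,\hat b_2$ satisfying $A\eta=\hat a_1\eta\hat b_1+\hat a_2\eta\hat b_2$. Rewriting this identity as a formula for $\varphi$ finishes the argument.

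Because both items are immediate specializations of decomposition theorems proved earlier, I do not anticipate a genuine obstacle; the substantive content already lives in Theorems~\ref{th:one-sum-Ap} and~\ref{th:two-sum-Ap}. The only points demanding care are the bookkeeping that links the three descriptions of $\varphi$ — as a sesquilinear form, as the operator $A$, and as the factored pairs $\hat a_j,\hat b_j$ — and the verification that the positivity and definiteness hypotheses of those two theorems are supplied by Lemma~\ref{th:cont-sesq-forms}.\eqref{it2:inn-p}. In particular, one must note $A\neq0$ before applying Theorem~\ref{th:one-sum-Ap}, and observe that the sharper ``positive'' (rather than merely ``non-negative'') conclusion in item (i) is exactly the refined output of that theorem when $A$ is positive.
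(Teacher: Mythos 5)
Your proposal is correct and follows exactly the route the paper takes: its proof of this theorem is the one-line observation that the result follows from Lemma~\ref{th:cont-sesq-forms} together with Theorems~\ref{th:one-sum-Ap} and~\ref{th:two-sum-Ap}. Your additional bookkeeping (noting $A\neq0$ so that Theorem~\ref{th:one-sum-Ap} applies, and that positivity of $A$ yields the ``positive'' rather than merely ``non-negative'' conclusion in item (i)) only makes explicit what the paper leaves to the reader.
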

\begin{proof}
It is simple to check from Lemma~\ref{th:cont-sesq-forms} and Theorems~\ref{th:one-sum-Ap},~\ref{th:two-sum-Ap}.
\end{proof}

We conclude with the following theorem, which exhibits a general characterization of definite inner products on $S_2$. 

\begin{thm}\label{th:describing-IP} If $\varphi\colon S_2\times S_2\to \C$ defines a definite inner product on $S_2$, then there exist non-negative operators $a_3,\ldots\in\mathcal{B(H)}$ and positive definite operators $a_1,a_2,b_1,\ldots\in\mathcal{B(H)}$ such that 
\begin{gather*}
\varphi(\eta,\tau)=-\tr{\eta^* a_1\tau b_1}+\sum_{n\geq2}\tr{\eta^* a_n\tau b_n}\,,\qquad \eta,\tau\in S_2\,.
\end{gather*}
Additionally, if there exist $\zeta_n>0$, for $n=2,\dots$, such that  
\begin{gather*}
b_n-\zeta_n b_1\,\mbox{ are positive definite}\quad\mbox{and}\quad 
-a_1+\sum_{n\geq2}\zeta_na_n\,\mbox{ is non-negative}\,,
\end{gather*}
then there exist non-negative operators $\hat a_1,\ldots\in\mathcal{B(H)}$, with $\cap_n\ker\hat  a_n=\{0\}$, and positive definite operators $ \hat b_1,\ldots\in\mathcal{B(H)}$ such that 
\begin{gather*}
\varphi(\eta,\tau)=\sum_{n}\tr{\eta^*\hat  a_n\tau\hat  b_{n}}\,,\qquad \eta,\tau\in S_2\,.\end{gather*}
\end{thm}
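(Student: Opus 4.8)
The plan is to translate the statement about the inner product $\varphi$ into the language of positive-definite operators in $\mathcal B(S_2)$ via the univocal correspondence, and then simply invoke the structural decomposition theorem already established for such operators. By Lemma~\ref{th:cont-sesq-forms}.\eqref{it2:inn-p}, a definite inner product $\varphi$ corresponds to a positive-definite operator $A\in\mathcal B(S_2)$ through $\varphi(\eta,\tau)=\tr{\eta^*A\tau}=\ip{\eta}{A\tau}_2$. This is the bridge: the entire content of the theorem is the operator-theoretic fact recorded in Theorem~\ref{th:charat-PDO-decomp}, repackaged in terms of traces.

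The key steps, in order, are as follows. First I would fix the positive-definite operator $A\in\mathcal B(S_2)$ associated to $\varphi$ by Lemma~\ref{th:cont-sesq-forms}. Second, I would apply the first conclusion of Theorem~\ref{th:charat-PDO-decomp} to obtain non-negative operators $a_3,\ldots$ and positive-definite operators $a_1,a_2,b_1,\ldots$ in $\mathcal{B(H)}$ realizing
\begin{gather*}
A\eta=-a_1\eta b_1+\sum_{n\geq2}a_n\eta b_n\,,\qquad \eta\in S_2\,.
\end{gather*}
Substituting this into $\varphi(\eta,\tau)=\tr{\eta^*A\tau}$ immediately yields the first displayed formula of the theorem. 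Third, under the additional hypothesis \eqref{eq:positive-definite-Cond}, I would invoke the second conclusion of Theorem~\ref{th:charat-PDO-decomp}, which supplies non-negative $\hat a_1,\ldots$ with $\cap_n\ker\hat a_n=\{0\}$ and positive-definite $\hat b_1,\ldots$ such that $A\eta=\sum_n\hat a_n\eta\hat b_n$; feeding this back through the trace gives the final formula.

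The only subtlety is bookkeeping: one must check that the hypotheses on $\varphi$ transfer verbatim to the hypotheses on $A$ demanded by Theorem~\ref{th:charat-PDO-decomp}, in particular that the condition ``$-a_1+\sum_{n\geq2}\zeta_na_n$ is non-negative'' stated for $\varphi$ is exactly the condition appearing in \eqref{eq:positive-definite-Cond}, and that the definiteness of $\varphi$ is precisely the positive-definiteness of $A$. I expect no genuine obstacle here, since all the analytic work — the repeated shifting arguments of Lemma~\ref{lem:1st-aproox-PD} and the sign-correction estimates in Theorem~\ref{th:charat-PDO-decomp} — has already been carried out; the present theorem is essentially a corollary obtained by unwinding the correspondence between inner products and positive operators. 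The proof is therefore short: identify $A$, apply Theorem~\ref{th:charat-PDO-decomp} twice, and rewrite each $\ip{\eta}{A\tau}_2$ as the corresponding sum of traces.
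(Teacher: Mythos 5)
Your proposal is correct and follows exactly the paper's own route: the paper proves this theorem by invoking Lemma~\ref{th:cont-sesq-forms}.\eqref{it2:inn-p} to identify the definite inner product with a positive definite $A\in\mathcal B(S_2)$ and then applying Theorem~\ref{th:charat-PDO-decomp}, which is precisely your plan. Nothing further is needed.
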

\begin{proof}
It readily follows from Lemma~\eqref{th:cont-sesq-forms}.\eqref{it2:inn-p} and Theorem~\ref{th:charat-PDO-decomp}.
\end{proof}

\begin{rem}
Theorems~\ref{th:producing-IP}-\ref{th:describing-IP} remain true if the conditions of $\{a_n\}$ and $\{b_n\}$ are interchanged.
\end{rem}

% ------------------------------------------------------------------------

\subsection*{Acknowledgment} This work was partially supported by CONACYT-Mexico Grants CBF2023-2024-1842, CF-2019-684340, and UAM-DAI 2024: ``Enfoque Anal\'itico-Combinatorio y su Equivalencia de Estados Gaussianos".

%\subsection*{Data availability} All data generated or analyzed during this study are included in this article.
\def\cprime{$'$} \def\lfhook#1{\setbox0=\hbox{#1}{\ooalign{\hidewidth
  \lower1.5ex\hbox{'}\hidewidth\crcr\unhbox0}}} \def\cprime{$'$}
  \def\cprime{$'$} \def\cprime{$'$} \def\cprime{$'$} \def\cprime{$'$}
  \def\cprime{$'$} \def\cprime{$'$}
\providecommand{\bysame}{\leavevmode\hbox to3em{\hrulefill}\thinspace}
\providecommand{\MR}{\relax\ifhmode\unskip\space\fi MR }
% \MRhref is called by the amsart/book/proc definition of \MR.
\providecommand{\MRhref}[2]{%
  \href{http://www.ams.org/mathscinet-getitem?mr=#1}{#2}
}
\providecommand{\href}[2]{#2}

% ------------------------------------------------------------------------
\end{document}